\documentclass[11pt]{article}
\usepackage{amsmath, amssymb, amsthm}

\date{}
\title{\vspace{-1cm} Sperner's Theorem and a Problem of Erd\H{o}s-Katona-Kleitman}
\author{
Shagnik Das \thanks{Department of Mathematics, UCLA, Los Angeles, CA, 90095. Email: shagnik@ucla.edu.}
\and
Wenying Gan \thanks{Department of Mathematics, UCLA, Los
Angeles, CA, 90095. Email: wgan@math.ucla.edu.}
\and
Benny Sudakov \thanks{Department of Mathematics, UCLA, Los Angeles, CA 90095.
Email: bsudakov@math.ucla.edu. Research supported in part by NSF
grant DMS-1101185, by AFOSR MURI grant FA9550-10-1-0569 and by a USA-Israel BSF grant.}
}

\oddsidemargin  0pt
\evensidemargin 0pt
\marginparwidth 40pt
\marginparsep 10pt
\topmargin 10pt
\headsep 10pt
\textheight 8.8in
\textwidth 6.65in

\theoremstyle{plain}
\newtheorem{THM}{Theorem}[section]
\newtheorem*{THM*}{Theorem}
\newtheorem{PROP}[THM]{Proposition}
\newtheorem{LEMMA}[THM]{Lemma}

\newtheorem{CONJ}[THM]{Conjecture}

\theoremstyle{definition}

\newcommand{\floor}[1]{\left\lfloor #1 \right\rfloor}
\newcommand{\ceil}[1]{\left\lceil #1 \right\rceil}
\newcommand{\card}[1]{\left| #1 \right|}
\newcommand{\cA}{\mathcal{A}}
\newcommand{\cB}{\mathcal{B}}
\newcommand{\cC}{\mathcal{C}}
\newcommand{\cF}{\mathcal{F}}
\newcommand{\cG}{\mathcal{G}}
\newcommand{\cH}{\mathcal{H}}
\newcommand{\sperner}{\binom{n}{\floor{ n / 2} }}

\begin{document}
\maketitle

\begin{abstract}
A central result in extremal set theory is the celebrated theorem of Sperner from 1928, which gives the size of the largest family of subsets of $[n]$ not containing a $2$-chain $F_1 \subset F_2$.  Erd\H{o}s extended this theorem to determine the largest family without a $k$-chain $F_1 \subset F_2 \subset \hdots \subset F_k$.  Erd\H{o}s and Katona, followed by Kleitman, asked how many chains must appear in families with sizes larger than the corresponding extremal bounds.

In 1966, Kleitman resolved this question for $2$-chains, showing that the number of such chains is minimized by taking sets as close to the middle level as possible.  Moreover, he conjectured the extremal families were the same for $k$-chains, for all $k$.  In this paper, making the first progress on this problem,
we verify Kleitman's conjecture for the families whose size is at most the size of the $k+1$ middle levels. We also characterize all extremal configurations.
\end{abstract}

\section{Introduction} \label{sec:introduction}

Sperner's Theorem is a central result in extremal set theory, giving the size of the largest family of sets not containing a $2$-chain $F_1 \subset F_2$.  Erd\H{o}s later extended this theorem to determine the largest family without a $k$-chain $F_1 \subset F_2 \subset \hdots \subset F_k$.  A natural question is to ask how many $k$-chains must appear in a family larger than this extremal bound.

More precisely, we consider the following problem, first posed by Erd\H{o}s and Katona and then extended by Kleitman some fifty years ago.  Given  a family $\cF$ of $s$ subsets of $[n]$, how many $k$-chains must $\cF$ contain?  We denote this minimum by $c_k(n,s)$, and determine it for a wide range of values of $s$.  This provides a quantitative strengthening of the Erd\H{o}s result on the size of $k$-chain-free families.

We shall now discuss the background of Sperner's Theorem and this problem further, before presenting our new results.

\subsection{Background}

Extremal set theory is one of the most rapidly developing areas in combinatorics, having applications to other branches of mathematics and computer science including discrete geometry, functional analysis, number theory and complexity.  The typical extremal problem has the following form: how large can a structure be without containing some forbidden configuration?  A classical example, considered by many to be the starting point of extremal set theory, is a theorem of Sperner \cite{sperner}.  An \emph{antichain} is a family of subsets of $[n]$ that does not contain sets $F_1 \subset F_2$.  Sperner's Theorem states that the largest antichain has $\sperner$ sets, a bound that is easily seen to be tight by considering the family of sets of size $\floor{ \frac{n}{2} }$.  This celebrated result enjoys numerous applications and has many extensions, many of which are discussed in Engel's book \cite{En}.  One particular extension, due to Erd\H{o}s \cite{erd45}, shows that the size of the largest set family without a \emph{$k$-chain}, that is, $k$-sets $F_1 \subset F_2 \subset \hdots \subset F_k$, is the sum of the $k-1$ largest binomial coefficients, $M_{k-1} = \sum_{i = \ceil{\frac{n-k+2}{2}}}^{\ceil{\frac{n+k-2}{2}}} \binom{n}{i}$.  When $k=2$, we recover Sperner's Theorem.

Our problem is what we refer to as an Erd\H{o}s-Rademacher-type extension of Erd\H{o}s' theorem, a name we now explain.  Arguably the most well-known result in extremal combinatorics is a theorem of Mantel \cite{mantel} from 1907, which states that an $n$-vertex triangle-free graph can have at most $\floor{ \frac{n^2}{4} }$ edges.  In an unpublished result, Rademacher strengthened this theorem by showing that any graph with $\floor{ \frac{n^2}{4} } + 1$ edges must contain at least $\floor{ \frac{n}{2} }$ triangles.  Erd\H{o}s \cite{erd62a} then extended this to graphs with a linear number of extra edges, and in \cite{erd62b} studied the problem for larger cliques.  More generally, for any extremal problem, the corresponding Erd\H{o}s-Rademacher problem asks how many copies of the forbidden configuration must appear in a structure larger than the extremal bound.

In the context of Sperner's Theorem, this problem was first considered by Erd\H{o}s and Katona, who conjectured that a family with $\sperner + t$ sets must contain at least $t \ceil{
\frac{n+1}{2}}$ $2$-chains.  Kleitman \cite{kleitman} confirmed the conjecture, and, in a far-reaching generalization, showed the minimum number of $2$-chains in a family of any fixed size
is obtained by choosing sets of size as close to $\frac{n}{2}$ as possible.  He then conjectured (see \cite{EK, kleitman}) that the same families minimize the number of
$k$-chains, a problem that has remained open for nearly fifty years.

\begin{CONJ}\label{conj:kleitman}
The number of $k$-chains in a family is minimized by choosing sets of sizes as close to $\frac{n}{2}$ as possible.
\end{CONJ}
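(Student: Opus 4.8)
The plan is to establish Conjecture~\ref{conj:kleitman} in the range where $|\cF|=s$ is at most the combined size of the $k+1$ levels of $2^{[n]}$ closest to the middle; I do not expect the techniques below to reach the full conjecture, because once a minimizing family is forced to occupy a $(k+2)$-nd level both the candidate extremal configurations and the bookkeeping for $k$-chains between non-adjacent levels become considerably more involved. The overall strategy splits into three stages: (i) reduce to \emph{centered} families --- those consisting of several completely filled levels around $n/2$ together with a bounded ``leftover'' in the immediately adjacent level(s); (ii) for centered families, derive an explicit formula for the number of $k$-chains; and (iii) optimize this formula over all ways of writing $s$ in this form, and trace the equalities to characterize the extremal configurations.

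For stage (i) I would fix a symmetric chain decomposition $\mathcal{D}$ of $2^{[n]}$ and, for a minimizing $\cF$, repeatedly apply a ``push toward the middle'' move: whenever $\cF$ meets a level $\ell$ far from $n/2$ while some level $\ell'$ strictly closer to $n/2$ (and on the same side) is not full, delete a suitably chosen set of $\cF$ in level $\ell$ and insert a suitably chosen set in level $\ell'$. Since $s$ does not exceed the size of the $k+1$ middle levels, iterating this terminates at a family supported on those $k+1$ levels with the inner levels full. The heart of the argument --- and what I expect to be the main obstacle --- is showing that a single such move never increases the number of $k$-chains; in contrast to Kleitman's $k=2$ case, one move perturbs an entire cascade of $k$-chains at once, and I would control this by combining a Kruskal--Katona / normalized-matching comparison between consecutive levels (to choose which sets to delete and insert, exploiting that a set far from the middle lies above or below exponentially many sets of the full middle levels) with a convexity estimate, over the chains of $\mathcal{D}$, for the resulting change in $k$-chain count.

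Stage (ii) exploits the rigidity of chains inside a narrow band of levels. If $\cF$ sits in $k$ consecutive levels, then every $k$-chain in $\cF$ must pick exactly one set from each level and step up by a single element at a time, so the number of $k$-chains telescopes: summing over the set $S$ in the topmost (partial) level, the number of $(k-1)$-chains of $\cF$ lying below $S$ equals $|S|\,(|S|-1)\cdots(|S|-k+2)$, a quantity depending only on $|S|$; adding the symmetric contribution from a bottom partial level and the chains living entirely in the full middle block gives a closed form in which each partial level contributes (its size) $\times$ (an explicit constant), independent of \emph{which} sets fill it. When $\cF$ occupies $k+1$ levels the same idea applies after a short case analysis over which single level a given $k$-chain omits. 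For stage (iii), because $|S|(|S|-1)\cdots(|S|-k+2)$ is strictly increasing in $|S|$ (and leftover sets placed higher are also counted from above, not only from below), the closed form is minimized by filling levels greedily outward from the center --- precisely the configuration in Conjecture~\ref{conj:kleitman}; equality in the relevant inequalities then pins the full levels down as exactly the middle ones, while the independence of the count from the choice of sets in a partial level shows that every choice within the uniquely determined partial level is also extremal, completing the characterization. The case $k=2$ of this scheme reproduces Kleitman's theorem, and an induction on $k$ (removing the outermost occupied level and relating $k$-chains in the remaining band to $(k-1)$-chains) can be used to organize the counting in stages (ii)--(iii).
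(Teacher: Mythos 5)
The central step of your plan---stage (i), the claim that a single ``push toward the middle'' move never increases the number of $k$-chains---is precisely the open difficulty, and nothing in your sketch resolves it. Even for $k=2$ this is not a local observation: Kleitman's shifting argument (reproduced in Appendix~\ref{app:shifting} as Proposition~\ref{prop:shifting}) must handle the case where no matching exists between the outlying sets $\cA$ and their missing shadows $\cB$, and this requires the bipartite Lemma~\ref{lem:bipartite} together with a careful comparison of the chains lost and gained. For $k\ge 3$ a deleted set participates in $k$-chains that reach across many levels with steps of varying sizes, and the gain from inserting a set into a nearly full middle band can a priori outweigh the loss; invoking ``Kruskal--Katona,'' ``normalized matching,'' and ``a convexity estimate'' names tools but does not produce the inequality. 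The paper deliberately avoids any exchange argument for $k\ge 3$: Theorems~\ref{thm:kstability} and~\ref{thm:allchains} are proved by an LYM-type count over permutations, assigning each $k$-chain to whichever endpoint is farther from the middle, bounding the number of permutations containing a chain via the step sizes, and using induction on $\card{\cF}$ to cap the number of chains through any single outlying set (plus Proposition~\ref{prop:nolargesets} to exclude sets of size $n-1$). That route yields the lower bound globally, with stability, without ever having to certify a single local move. As the concluding remarks note, controlling all step-size types is exactly what blocks extending either approach beyond $k+1$ levels, so your stage (i) as stated is at least as hard as the theorem you are trying to prove.

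There is also a concrete error in stages (ii)--(iii). When the family occupies $k$ (or $k+1$) consecutive levels with \emph{both} boundary levels only partially filled, a $k$-chain can use one set from each partial boundary level joined by a saturated chain through the full interior, and the number of such chains is $(k-1)!$ times the number of comparable pairs between the two partial levels. Hence the count is \emph{not} ``independent of which sets fill'' the partial levels, and your conclusion that ``every choice within the uniquely determined partial level is also extremal'' is false: Theorem~\ref{thm:2chains} (conditions 3 and 4) and the characterizations following Theorems~\ref{thm:klevels} and~\ref{thm:k+1levels} show that the boundary sets (or their complement within the boundary levels) must form an antichain. Your closed-form counting is salvageable if you track these cross-boundary chains explicitly, but as written the optimization in stage (iii) both omits a term and misstates the set of minimizers.
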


\subsection{Our results}

In this paper we study these Erd\H{o}s-Rademacher-type extensions of the theorems of Sperner and Erd\H{o}s.  We began by considering the case of $2$-chains, and determined the minimum number of $2$-chains in a family of any number of sets.  Later, we discovered Kleitman had earlier obtained the same result.  However, through slightly more careful calculations, and by introducing an additional argument, we are able to characterize all extremal families, as given below.

\begin{THM} \label{thm:2chains}
Let $\mathcal{F}$ be a family of subsets of $[n]$, with $\card{\cF} = s \ge \sperner$.  Let $r \in \frac{1}{2} \mathbb{N}$ be the unique half-integer such that $\sum_{i = \frac{n}{2} - r + 1}^{\frac{n}{2} + r - 1} \binom{n}{i} < s \le \sum_{i = \frac{n}{2} - r}^{\frac{n}{2} + r} \binom{n}{i}$.  Then $\mathcal{F}$ minimizes the number of $2$-chains if and only if the following conditions are satisfied:
\begin{enumerate}
	\item For every $F \in \cF$, $\frac{n}{2} - r \le \card{F} \le \frac{n}{2} + r$.
	\item For any $A \subset [n]$ with $\frac{n}{2} - r + 1 \le \card{A} \le \frac{n}{2} + r - 1$, we have $A \in \cF$.
	\item If $s \le \sum_{i = \frac{n}{2} - r}^{\frac{n}{2} + r - 1} \binom{n}{i}$, then $\{ F \in \cF : \card{F} = \frac{n}{2} \pm r \}$ forms an antichain.
	\item If $s \ge \sum_{i = \frac{n}{2} - r}^{\frac{n}{2} + r - 1} \binom{n}{i}$, then $\{ F \notin \cF : \card{F} = \frac{n}{2} \pm r \}$ forms an antichain.
\end{enumerate}
\end{THM}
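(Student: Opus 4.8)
\emph{Overview and the lower bound.}
The plan is to prove a lower bound on the number of $2$-chains matching the value attained by the families in the statement, and then to obtain the characterization by analyzing exactly when the bound is tight. For the bound I would use a \emph{centralizing compression}. For each pair of adjacent levels $j,j+1$ in the lower half of the cube, the normalized matching property of the Boolean lattice lets one replace the part of $\cF$ lying on these two levels by a family of the same total size pushed ``as high as possible'' — level $j+1$ is filled before any set is left on level $j$ — and one checks that this operation does not increase the number of $2$-chains; symmetrically one pushes down above the middle. Iterating these moves in a suitable order drives $\cF$ to a \emph{centrally compressed} family: for the half-integer $r$ of the statement, every level strictly between $\frac{n}{2}-r$ and $\frac{n}{2}+r$ is full, every level outside $[\frac{n}{2}-r,\frac{n}{2}+r]$ is empty, and the remaining $e:=s-\sum_{i=\frac{n}{2}-r+1}^{\frac{n}{2}+r-1}\binom{n}{i}$ sets lie on the two boundary levels $\frac{n}{2}\pm r$; here $0<e\le 2\binom{n}{\frac{n}{2}-r}$, which is precisely how $r$ is pinned down by $s$.

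\emph{Counting centrally compressed families.}
Let $w:=\binom{n}{\frac{n}{2}-r}$ be the common size of the two boundary levels and let $\cB$ be the set of members of a centrally compressed family lying on those levels. Its $2$-chains split into three kinds: both endpoints in the full interior, contributing a constant $N_0=N_0(n,r)$; one endpoint in $\cB$ and one in the interior, where each $B\in\cB$ is comparable to the same number $\kappa=\kappa(n,r)$ of interior sets (the same number for both boundary levels, by symmetry of the binomial coefficients), contributing $\kappa e$ in total; and both endpoints in $\cB$, that is, $q$ comparable pairs between the two boundary levels. So the count is $N_0+\kappa e+q$. Looking at the bipartite containment graph between the two boundary levels — each of its vertices lies in exactly $\lambda=\lambda(n,r)$ edges, two vertices cover a common edge only if they are comparable, and the largest antichain in the union of the two levels has size $w$ — one obtains $q\ge\max\{0,\lambda(e-w)\}$, with equality if and only if $\cB$ is an antichain (when $e\le w$) or the complement of $\cB$ within the two boundary levels is an antichain (when $e\ge w$). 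Together with the compression step this proves the lower bound; the two cases meet at $e=w$, that is at $s=\sum_{i=\frac{n}{2}-r}^{\frac{n}{2}+r-1}\binom{n}{i}$, matching the split between conditions 3 and 4.

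\emph{The characterization.}
The ``if'' direction is then immediate, since conditions 1--4 describe exactly the centrally compressed families for which $q$ equals its minimum $\max\{0,\lambda(e-w)\}$, and these are optimal by the two paragraphs above. For ``only if'', suppose $\cF$ is optimal. I would first show $\cF$ must be centrally compressed, giving conditions 1 and 2, by going back through the compression and determining exactly when a single level-pair step leaves the number of $2$-chains unchanged: since the interior levels are full, a set sitting below level $\frac{n}{2}-r$ has strictly more forced supersets in $\cF$ than a boundary set would, so pushing it inwards strictly decreases the count, and analogous considerations rule out a non-full interior level; conditions 3 and 4 then follow from the equality case for $q$. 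The step I expect to be the main obstacle is precisely this equality analysis — ruling out \emph{every} non-compressed optimum forces one to understand tightness of the compressions level by level rather than only in aggregate, and to control separately the two boundary levels $\frac{n}{2}\pm r$ and how $\cF$ meets them. The bookkeeping for $N_0$, $\kappa$ and $\lambda$ is routine; this tightness analysis is the real content, and presumably the ``additional argument'' needed beyond Kleitman's original bound.
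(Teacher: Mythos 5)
Your endgame — reducing to a ``centrally compressed'' family and then analyzing the boundary levels via $N_0+\kappa e+q$ with $q\ge\max\{0,\lambda(e-w)\}$, equality iff $\cB$ (resp.\ its complement) is an antichain — is correct and is essentially the computation the paper performs in its last case. The gap is the step you dispatch with ``one checks that this operation does not increase the number of $2$-chains.'' That claim is false for the local adjacent-level compression you describe, because the sets created on level $j+1$ interact with $\cF$ on \emph{other} levels in an uncontrolled way. Concretely, take $n=6$ and $\cF=\binom{[6]}{1}\cup\binom{[6]}{2}$, so $s=21\ge\binom{6}{3}$. The family has $30$ comparable pairs. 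Compressing levels $1,2$ does nothing (level $2$ is already full), and compressing levels $2,3$ moves all fifteen $2$-sets to level $3$, where each new $3$-set contains three of the singletons: the count rises to $45$ no matter which $3$-sets are chosen. So no admissible first move decreases the count, and the process is genuinely non-monotone; establishing the lower bound this way would require a potential-function or global argument that you have not supplied. This is precisely where the difficulty of Kleitman's theorem lives. The paper does not compress arbitrary families at all: it works only with a family already assumed optimal, proves (Proposition \ref{prop:shifting}, via a Hall-type matching lemma) that a maximal set $A$ with $|A|=\frac n2+m$ forces its entire downward cone into $\cF$ down to level $\frac n2-m+1$, concludes that $A$ lies in at least $\sum_{i=1}^{2m-1}\binom{n/2+m}{i}$ chains, and then inducts on $s$ by deleting $A$.

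There is a second, related gap in your ``only if'' direction. To exclude a non-compressed optimum you argue that ``since the interior levels are full, a set sitting below level $\frac n2-r$ has strictly more forced supersets in $\cF$,'' but fullness of the interior levels is one of the conclusions (Property 2), so the argument is circular as stated. Even granting a monotone compression, one must show \emph{strict} decrease for every family violating Properties 1--4, and for that some structural statement about optimal families (such as Proposition \ref{prop:shifting}) seems unavoidable; it is exactly the ingredient your outline is missing. The bookkeeping for $N_0$, $\kappa$, $\lambda$ that you call routine is indeed routine and agrees with the paper; the compression and its equality analysis are the content, and they are not established.
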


Our main results verify Conjecture \ref{conj:kleitman} for families of certain sizes.  To begin with, recall that Erd\H{o}s showed the largest family without $k$-chains consists of the $k-1$ middle levels of the hypercube, whose size we denote by $M_{k-1}$.  If we were to add one set to this family, the best we could do would be to add it to the $k$th level, in which case we would create $\binom{\floor{(n+k)/2}}{k-1} (k-1)!$ $k$-chains.  Indeed, we show that every additional set must contribute at least this many new $k$-chains, and the above construction shows this is tight when our extremal family is contained within the $k$ middle levels.

\begin{THM} \label{thm:klevels}
If $\cF$ is a set family over $[n]$ of size $s = M_{k-1} + t$, then $\cF$ contains at least $t \binom{ \floor{ (n + k) / 2 }}{k-1} (k-1)!$ $k$-chains.
\end{THM}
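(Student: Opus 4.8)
The plan is to beat the natural extremal configuration and then show every family does at least as well. That configuration is the $k-1$ middle levels (of total size $M_{k-1}$) together with $t$ further sets placed in the adjacent level just below the band, i.e.\ of size $\ceil{(n-k)/2}$. Given such an added set $F$, any $k$-chain $F=F_1\subsetneq F_2\subsetneq\dots\subsetneq F_k$ through it must use sets $F_2,\dots,F_k$ of all $k-1$ band sizes, so the number of such chains is $\prod_{j}(n-\card{F_j})=\binom{\floor{(n+k)/2}}{k-1}(k-1)!=:D$; equivalently, by complementation, $D=m(m-1)\cdots(m-k+2)$ with $m=\floor{(n+k)/2}$ is the number of descending chains of consecutive sizes below a fixed $m$-set, which is exactly the constant in the theorem. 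Distinct added sets sit at the bottom of disjoint $k$-chains, so this family has precisely $tD$ $k$-chains, and it suffices to prove $N_k(\cF)\ge(\card{\cF}-M_{k-1})D$ for every family $\cF$, where $N_k$ counts $k$-chains.

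The first step would be a compression reduction to a canonical shape. Two operations are used: a centralizing move, replacing a set whose size is far from $n/2$ by a set nearer the middle whenever a level closer to the middle is not full (sets far from the centre are forced into many chains, so this should only decrease $N_k$); and a within-level compression, replacing the sets of $\cF$ of each fixed size by an initial segment of a fixed ordering (colexicographic, or one adapted to a symmetric chain decomposition). After these moves one may assume $\cF$ is a union of complete levels forming a band about the middle, plus at most one partial level at an end of the band which is an initial segment, in particular an antichain within its level. The hard part, and the technical core of the argument, is verifying that these compressions do not increase the number of $k$-chains: for $k=2$ this is essentially Kruskal--Katona (equivalently, the normalized matching property) for two consecutive levels, which is in effect what Kleitman used, but for general $k$ the number of $k$-chains through a set $F$ mixes $(k-1)$-chains of $\cF$ lying below and above $F$, while a single compression perturbs shadows on every level at once, so one needs iterated Kruskal--Katona-type estimates applied simultaneously to several levels.

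Once $\cF$ is canonical, $N_k(\cF)$ is controlled by its level-size vector $(f_i)_i$, with $f_i=\card{\{F\in\cF:\card{F}=i\}}$, $\sum_i f_i=M_{k-1}+t$, and $0\le f_i\le\binom{n}{i}$: repeated use of the normalized matching inequality yields a lower bound $N_k(\cF)\ge\Phi(f_\bullet)$ for an explicit convex function $\Phi$ that is met with equality by the canonical families. It then remains to minimize $\Phi$ over this finite, convex polytope of profiles, and an exchange argument---moving a set from one level to a level nearer the middle never increases $\Phi$---shows the minimum is attained by filling the middle band and placing the remaining $t$ sets in the single adjacent level of size $\ceil{(n-k)/2}$, where $\Phi$ evaluates to exactly $tD$. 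Chaining the three steps gives $N_k(\cF)\ge tD$, and carefully tracking the equality cases along the way is what should produce the characterization of extremal families.

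Finally, a tempting shortcut is to argue directly from a symmetric chain decomposition $\mathcal{S}$: since a $k$-chain-free family meets each chain $\mathcal{C}\in\mathcal{S}$ in at most $\min(k-1,\ell_\mathcal{C})$ sets and $\sum_{\mathcal{C}}\min(k-1,\ell_\mathcal{C})=M_{k-1}$, one gets $\sum_{\mathcal{C}}\bigl(\card{\cF\cap\mathcal{C}}-\min(k-1,\ell_\mathcal{C})\bigr)=t$. However the bound this immediately gives, $N_k(\cF)\ge\sum_{\mathcal{C}}\binom{\card{\cF\cap\mathcal{C}}}{k}$, is far too weak (pushing all of $\cF$ into one long symmetric chain contributes only $\binom{k-1+t}{k}$), because it ignores the $k$-chains of $\cF$ straddling several parts of $\mathcal{S}$, and accounting for those reintroduces exactly the difficulty handled by the compression step; so I would use the symmetric chain decomposition only as a guide for identifying the extremal profile rather than as the main tool.
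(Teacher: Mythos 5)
There is a genuine gap: the entire weight of your argument rests on the compression step, and you do not prove it --- you only name it as ``the hard part.'' Specifically, you need that (i) replacing a set far from the middle by a set in a nearer, non-full level never increases the number of $k$-chains, and (ii) replacing each level of $\cF$ by an initial segment of some fixed order never increases it. Neither is known for general $k$, and (i) is essentially the content of Kleitman's conjecture itself: a set far from the middle is \emph{potentially} in many chains, but the family may be arranged so that it lies in few actual chains, and showing that the swap helps requires detailed structural information about $\cF$. Even for $k=2$ this is delicate --- it is Proposition~\ref{prop:shifting}, whose proof needs the minimality of $\ell$, a case analysis, and the matching Lemma~\ref{lem:bipartite} when Hall's condition fails --- and no multi-level Kruskal--Katona machinery is available that controls $k$-chains (which require a single nested tower, not just pairwise comparabilities between consecutive levels) under a compression that perturbs every shadow at once. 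Your subsequent reduction of $c_k(\cF)$ to a convex function $\Phi$ of the level profile via ``repeated normalized matching'' has the same problem: normalized matching bounds $2$-chain counts between consecutive levels, but chaining such bounds does not by itself lower-bound the number of $k$-chains. Your final paragraph correctly diagnoses why the naive symmetric-chain count fails, but the proposed fix is exactly the unproven compression.

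The paper takes a different route that avoids compression entirely: a global LYM-type permutation count. Summing over $\sigma\in S_n$ the number of sets of $\cF$ that are initial segments of $\sigma$ minus the number of $k$-chains contained in $\sigma$, and using $m-\binom{m}{k}\le k-1$, gives inequality \eqref{ineq:LYM1}. Each $k$-chain is then assigned to whichever endpoint is farther from the middle, the number of permutations containing a chain is bounded by $\card{F}!\,(n-\card{F}-(k-1))!$ (respectively its complementary form), and an induction on $\card{\cF}$ caps the number of chains through any single set outside the middle band. This yields the stability bound of Theorem~\ref{thm:kstability}, from which Theorem~\ref{thm:klevels} follows by a convexity observation on the resulting weights --- no shifting, no Kruskal--Katona. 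If you want to salvage your outline, you would need to either prove the compression lemmas (which would be a substantial new result) or switch to a global counting argument of this kind.
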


We are then able to extend our argument to work for larger set families, obtaining a result that is tight when the extremal family is contained within the $k+1$ middle levels.

\begin{THM} \label{thm:k+1levels}
Provided $n \ge 15$ and $k \le n - 6$, if $\cF$ is a set family over $[n]$ of size $s = M_k + t$, then the number of $k$-chains in $\cF$ is at least
\[ \binom{n}{\ceil{ (n-k)/2}} \binom{\floor{ (n + k) / 2}}{k-1} (k-1)! + t \left( \binom{\ceil{ (n+k)/2 }}{k-1} + \binom{\ceil{(n+k)/2}}{k} \binom{k}{2} \right)(k-1)!. \]
\end{THM}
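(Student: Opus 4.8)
The plan is to push the random‑maximal‑chain argument behind Theorem~\ref{thm:klevels} one band of levels further. Let $\mathcal C$ be a uniformly random maximal chain in the cube (equivalently a random ordering of $[n]$ together with its chain of prefixes), and set $X=\card{\cF\cap\mathcal C}$, so $\Exp[X]=\sum_{F\in\cF}\binom n{\card F}^{-1}$. The starting observation is that the probability a given $k$-chain $F_1\subset\dots\subset F_k$ of $\cF$ lies in $\mathcal C$ equals $\card{F_1}!\bigl(\prod_i(\card{F_{i+1}}-\card{F_i})!\bigr)(n-\card{F_k})!/n!$, which depends only on the \emph{type} of the chain (the list of consecutive size‑gaps, together with its location); so if $\cF$ sits in a short band of levels there are very few types. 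Writing $\tau$ for a type, $N_\tau$ for the number of $k$-chains of that type in $\cF$, and $q_\tau$ for the common probability that such a chain lies in $\mathcal C$, one gets the exact identity $N_\tau=q_\tau^{-1}\Exp[\,\#\{\text{type-}\tau\text{ chains inside }\cF\cap\mathcal C\}\,]$, and summing over types an exact formula for the number of $k$-chains of $\cF$ in terms of the hitting probabilities $\Exp[\prod_{\ell\in\tau}\mathbf 1\{\mathcal C\text{ meets }\cF\text{ on level }\ell\}]$.

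First I would reduce to the case that $\cF$ lies in the $k+1$ middle levels $\binom{[n]}{\ell_1},\dots,\binom{[n]}{\ell_{k+1}}$ (with $\ell_{j+1}=\ell_j+1$), via a compression/exchange argument — pushing sets of $\cF$ towards the middle does not increase the number of $k$-chains — together with the monotonicity of $c_k(n,\cdot)$; the regime $s>M_{k+1}$, where $\cF$ cannot fit into $k+1$ levels, is disposed of separately and crudely, since there the number of $k$-chains already swamps the linear‑in‑$t$ target. After the reduction $X\le k+1$, so $\binom Xk\in\{0,1,k+1\}$ and there are exactly $k+1$ chain types: the two \emph{flat} types using the bottom $k$ or the top $k$ of the band, and the $k-1$ \emph{single‑gap} types that skip one interior level. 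A short computation gives $q_\tau=q_0$ for both flat types, with $q_0^{-1}$ equal (up to the usual rounding) to the first term $\binom n{\ceil{(n-k)/2}}\binom{\floor{(n+k)/2}}{k-1}(k-1)!$ of the claimed bound, and $q_\tau=\tfrac2w q_0$ for each single‑gap type, where $w=\floor{(n-k)/2}+1$; thus single‑gap chains are ``cheaper'', which is the source of the $\binom k2$ term.

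Next, for the type $\tau_j$ that misses level $\ell_j$ I would bound its hitting probability from below by the union bound $\Exp[\mathbf 1_{\tau_j}]=\Pr[\mathcal C\text{ meets }\cF\text{ on all levels}\neq\ell_j]\ge 1+\delta-f_j/\binom n{\ell_j}$, where $f_j=\card{\cF\cap\binom{[n]}{\ell_j}}$ and $\delta=\Exp[X]-k=\sum_j f_j/\binom n{\ell_j}-k$. Substituting these into the exact formula and using $\sum_j f_j/\binom n{\ell_j}=k+\delta$ to eliminate the interior levels, the number of $k$-chains is at least $q_0^{-1}$ times an affine expression in $\delta$ and in $P:=(f_1+f_{k+1})/\binom n{\ell_1}$ whose coefficients of $\delta$ and of $P$ are both positive once $w>2$; since $\card\cF=M_k+t$ forces $\sum_{2\le j\le k}f_j\le M_k-\binom n{\ell_1}$ and hence $P\ge 1+t/\binom n{\ell_1}$, while always $\delta\ge t/\binom n{\ell_1}$, one substitutes these extreme feasible values and the affine expression collapses to exactly $1+\tfrac t{\binom n{\ell_1}}\bigl(1+\tfrac{(k-1)w}2\bigr)$; rewriting $w$ and $q_0$ via $\binom mk k=\binom m{k-1}(m-k+1)$ turns this into precisely the bound in the statement. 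Sharpness is witnessed by ``$k$ full middle levels plus $t$ sets on an adjacent level'': there every union bound above holds with equality, and each of the $t$ extra sets creates $\binom{\ceil{(n+k)/2}}{k-1}(k-1)!$ flat chains and $\binom{\ceil{(n+k)/2}}{k}\binom k2(k-1)!$ single‑gap chains.

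The main obstacle is twofold. First, the reduction to the $k+1$ middle levels: for $k\ge3$ the statement that pushing a set towards the middle does not increase the $k$-chain count is not standard, and its proof rests on the normalized matching property together with delicate binomial‑coefficient comparisons — this, plus the parity analysis needed so that the two flat types genuinely behave symmetrically when $n-k$ is odd, is where the numerical hypotheses $n\ge15$ and $k\le n-6$ are really consumed. Second, one must resist the temptation to discard the single‑gap types (which carry little mass in the extremal example but are precisely what force the $\binom k2$ term): the union bounds for all $k+1$ types have to be combined with the size constraint in exactly the right way, so that when the ``deficiency'' of $\cF$ is spread over several levels it is forced onto the cheaper single‑gap types. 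Getting this bookkeeping to land on the stated formula is the technical heart; the rest is routine.
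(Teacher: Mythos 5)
Your computation inside the $k+1$ middle levels is plausible and in fact mirrors the paper's own bookkeeping: your split into two ``flat'' types and $k-1$ ``single-gap'' types corresponds to the paper's partition of the $k$-chains into $\cC_1(\cF)$ (all steps of size $1$) and $\cC_2(\cF)$ (one step of size $2$), and your ratio $q_\tau=\tfrac2w q_0$ is exactly the source of the $\binom{a}{k}\binom{k}{2}$ term there as well. The genuine gap is your very first step: the reduction to families contained in the $k+1$ middle levels. You invoke a compression lemma --- ``pushing sets of $\cF$ towards the middle does not increase the number of $k$-chains'' --- and defer its proof to ``the normalized matching property together with delicate binomial-coefficient comparisons.'' No such lemma is known for $k\ge 3$; if it were available it would essentially settle Conjecture~\ref{conj:kleitman} for all family sizes, which is precisely the open problem this theorem makes partial progress on. Even for $k=2$ the analogous shifting step (Proposition~\ref{prop:shifting}) requires a nontrivial matching argument in a bipartite inclusion graph, and the failure of that argument to generalize is exactly why the paper abandons compression for $k\ge 3$. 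Worse, the unconditional compression statement cannot be true as you use it: the paper notes that for $k=n-3$ extremal families are forced by volume to contain sets of size $n-1$, so ``compress into the middle band'' must fail without size restrictions.

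The paper instead works with arbitrary families directly: the permutation (LYM-type) count is applied to all of $\cF$, each $k$-chain is charged to whichever endpoint is farther from the middle (so that $\card{\cap_i S_n[F_i]}$ can be bounded after fixing that endpoint), and the resulting inequality is proved by induction on a deficiency parameter $t_1+t_2$ whose stability terms record both excess sets outside the middle levels and missing sets inside them. This machinery is what replaces your compression step, and it also exposes a case your proposal never confronts: a chain running from a $1$-set to an $(n-1)$-set lies in $(\card{F_k}-k+1)!\,(n-\card{F_k})!$ permutations no matter which endpoint is fixed, which breaks the ``one step of size $2$'' bound on $\card{\cap_i S_n[F_i]}$. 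The paper needs a separate shadow argument (Proposition~\ref{prop:nolargesets}) to exclude sets of size $n-1$, and that is where the hypotheses $n\ge 15$ and $k\le n-6$ are actually consumed --- not, as you suggest, in a parity analysis of the flat types. Until you can either prove your compression lemma or rebuild the argument to handle sets far from the middle (including the $(n-1)$-set pathology), the proof is incomplete at its foundational step.
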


In both cases, we actually obtain stronger results (see Theorems \ref{thm:kstability} and \ref{thm:allchains} respectively), providing stability versions of the above theorems, showing that if a family has close to the minimum number of $k$-chains, it must be close in structure to the extremal example.
These stability results are of interest even in the case $k=2$, as one does not obtain any stability from the Kleitman proof for $2$-chains.
We then use the stability results to show that when the above bounds are tight, the extremal families are exactly as in Theorem \ref{thm:2chains}.

\subsection{Outline and notation}

The remainder of this paper is organized as follows.  Section \ref{sec:2chains} contains a proof of Theorem \ref{thm:2chains}.  In Section \ref{sec:klevels}, we prove Theorem \ref{thm:klevels}, and then in Section \ref{sec:k+1levels} prove Theorem \ref{thm:k+1levels}.  In the final section we present some concluding remarks and open problems.  Appendix \ref{app:shifting} contains the proof of a technical proposition needed for Theorem \ref{thm:2chains}.

\medskip

We let $[n]$ denote the set of the first $n$ integers.  For a ground set $X$ and an integer $i$, we denote the family of $i$-subsets of $X$ by $\binom{X}{i} = \{ Y \subseteq X : |Y| = i \}$.  We let $M_k = \sum_{i = \ceil{n-k+1 / 2}}^{\ceil{n+k-1 / 2}} \binom{n}{i}$ be the size of the $k$ middle, and thus largest, levels.  Given a family $\cF$ of subsets of $[n]$, we let $\cF_i = \cF \cap \binom{[n]}{i}$ denote those sets in $\cF$ of size $i$.  The $\ell$-shadow of a family is given by $\partial^{\ell} \cF = \{ G : \exists F \in \cF, G \subset F, |G| = |F| - \ell \}$.  For a subset $F \subset [n]$, we define $m(F) = \max \{ \card{F}, n - \card{F} \}$.

Given a set family $\cF$, $c_k(\cF)$ denotes the number of $k$-chains in $\cF$.  For any $n \in \mathbb{N}$ and $0 \le s \le 2^n$, we let $c_k(n,s)$ denote the minimum of $c_k(\cF)$ over all families $\cF$ of $s$ subsets of $[n]$.  When $k=2$, if we have two families $\cF$ and $\cG$, then we let $c_2(\cF, \cG)$ denote the number of $2$-chains with one set from $\cF$ and one set from $\cG$.

\section{Counting $2$-chains} \label{sec:2chains}

In this section we will prove Theorem \ref{thm:2chains}, characterizing those families that minimize the number of $2$-chains.  We essentially show that it is optimal to take sets of sizes as close to $\frac{n}{2}$ as possible.  The theorem then prescribes how the boundary sets can be distributed.

\medskip

Sperner's Theorem shows that the largest antichain is given by one of the middle levels, that is either all sets of size $\floor{\frac{n}{2}}$ or all sets of size $\ceil{\frac{n}{2}}$.  Obviously, an antichain minimizes the number of $2$-chains, as it has none.  This theorem is then a natural extension of Sperner's Theorem, as it shows that to construct a family of any size that minimizes the number of $2$-chains, one should start by taking sets of size $\floor{\frac{n}{2}}$, then sets of size $\floor{\frac{n}{2}} + 1$, then $\floor{\frac{n}{2}} - 1$, and so on until one has a family of the desired size.  As we shall show, these families are optimal, and so we may denote the number of $2$-chains in the first $s$ such sets by $c_2(n,s)$.

The idea behind the proof is as follows.  If our family $\cF$ contains a set $F$ that is too far away from the middle (i.e. $\card{ \card{F} - \frac{n}{2} } > r$), then we will show that we can shift $F$ closer to the middle and decrease the number of $2$-chains.  Once we have our family contained in the $2r+1$ middle layers, a simple counting argument will give the characterization of extremal families.  As the shifting process is essentially the same as in Kleitman's proof in \cite{kleitman}, we relegate the proof of the following proposition to Appendix \ref{app:shifting}.

\begin{PROP} \label{prop:shifting}
Let $\cF$ be a family of $s > \sperner$ subsets of $[n]$ minimizing the number of $2$-chains.  If $A \in \cF$ is of maximal cardinality, with $|A| = \frac{n}{2} + m$, then for any $B \subset A$, $|B| \ge \frac{n}{2} - m + 1$, we have $B \in \cF$.
\end{PROP}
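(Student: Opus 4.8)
The plan is to argue by a local exchange (compression) argument. Suppose $\cF$ minimizes the number of $2$-chains, let $A \in \cF$ have maximal cardinality $|A| = \tfrac{n}{2} + m$, and suppose for contradiction that some $B \subset A$ with $|B| \ge \tfrac{n}{2} - m + 1$ has $B \notin \cF$. Among all such missing subsets of sets of maximal size, pick $B$ of \emph{maximal} cardinality, so that every $B' \subset A$ with $|B'| > |B|$ lies in $\cF$. I would then form the family $\cF' = (\cF \setminus \{A\}) \cup \{B\}$, which has the same size as $\cF$, and show $c_2(\cF') \le c_2(\cF)$, with equality only if a further swap leads to a contradiction; this is the content one extracts from Proposition \ref{prop:shifting}'s proof in the appendix, but here is how the bookkeeping should run.

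The key step is to compare, for each set $C \in \cF \setminus \{A\}$, its contribution to $2$-chains with $A$ versus with $B$. The pairs we must account for are: (i) pairs $\{A, C\}$ that form a $2$-chain but $\{B,C\}$ does not, and (ii) pairs $\{B, C\}$ that form a $2$-chain but $\{A, C\}$ does not. Since $B \subset A$, any $C \subsetneq B$ also satisfies $C \subsetneq A$, so type (i) losses are exactly the $C \in \cF$ with $C \supsetneq B$ but $C \not\subseteq A$ and $C \not\supseteq A$, together with $C$ comparable to $A$ but not to $B$; type (ii) gains are the $C \in \cF$ with $B \subsetneq C \subseteq A$ (these create a new chain with $B$ but not with $A$) plus any $C \subsetneq A$ with $C$ incomparable to $B$. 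The crucial observation is that, because $A$ has \emph{maximal} cardinality in $\cF$, no $C \in \cF$ strictly contains $A$, which kills one whole category of losses; and because we chose $B$ of maximal cardinality among missing subsets of $A$, every subset of $A$ of size strictly between $|B|$ and $|A|$ is present in $\cF$, so these sets contribute to type (ii) but the corresponding sets through $B$'s ``upward'' cone toward $A$ are controlled. Counting shows the number of new chains created at $B$ is strictly smaller than the number destroyed at $A$, unless $\cF$ already contains all of $\binom{A}{\,\cdot\,}$ above level $|B|$ \emph{and} the relevant shadow structure forces equality — in which case one performs the swap, lands in an equally good family with strictly more sets of size $|B|$ inside $A$'s down-set, and repeats, eventually reaching a contradiction with minimality or with the count.

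The main obstacle I anticipate is the equality analysis: a single swap $A \mapsto B$ need not \emph{strictly} decrease $c_2(\cF)$ — it may only weakly decrease it — so one cannot immediately conclude. The resolution, which is standard in this type of Kleitman-style argument, is to set up a potential function (e.g. $\sum_{F \in \cF} \binom{|F| - n/2}{\text{something}}$, or more simply $\sum_F m(F)$ refined by a secondary count of ``holes'' in down-sets of maximal sets) that strictly decreases under the swap whenever $c_2$ does not strictly increase, and to observe that it is bounded below; this forces the process to terminate at a family in which no such $B$ is missing. Making the potential work cleanly, and checking that the swap never increases $c_2$ even in the boundary cases $|B| = \tfrac{n}{2} - m + 1$, is where the careful (but routine) calculation lives, and this is exactly what is deferred to Appendix \ref{app:shifting}.
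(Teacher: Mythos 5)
Your single-set exchange $\cF' = (\cF\setminus\{A\})\cup\{B\}$ does not work, and the failure is quantitative, not merely a matter of equality cases. Write $\ell = |A|-|B|$. Since $A$ has maximal cardinality, the chains destroyed are exactly the pairs $\{A,C\}$ with $C\in\cF$, $C\subsetneq A$; the pairs with $C\subsetneq B$ or $B\subsetneq C\subsetneq A$ are replaced one-for-one by pairs through $B$, so the net loss is only the number of $C\in\cF$ with $C\subsetneq A$ \emph{incomparable} to $B$, which (using your choice that all subsets of $A$ of size exceeding $|B|$ lie in $\cF$) is at least $\sum_{i=1}^{\ell-1}\bigl(\binom{n/2+m}{i}-\binom{\ell}{i}\bigr)$. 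The net gain is the number of $C\in\cF$ with $C\supsetneq B$ and $C\not\subseteq A$ --- in particular every \emph{other} maximal set $A'\in\cF$ of size $\frac{n}{2}+m$ containing $B$ contributes a brand-new chain $B\subset A'$, and nothing in your setup controls how many such $A'$ there are. Already for $\ell=1$, $m=1$ the net loss is $0$ while the gain can be as large as $\frac{n}{2}-1$, so the swap can strictly \emph{increase} $c_2$. This is precisely why the paper (following Kleitman) does not swap one set: it collects all maximal sets with a deficient $\ell$-shadow into $\cA$, forms the bipartite inclusion graph to $\cB=\partial^{\ell}\cA\setminus\cF$, and shifts an entire matched subfamily $\cA_1\to\cB_1$ simultaneously, so that the only uncontrolled new chains are counted by $e(\cA\setminus\cA_1,\cB_1)$ and bounded by the auxiliary bipartite lemma via Hall's theorem. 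Your proposal contains no analogue of this simultaneous shift, and without it the central counting claim (``the number of new chains created at $B$ is strictly smaller than the number destroyed at $A$'') is false.

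A second, smaller gap: your escape hatch for the equality case --- a potential function such as $\sum_F m(F)$ ``refined by a secondary count of holes'' --- is never defined, and the paper's actual treatment of the delicate case $\ell=1$ is quite different (it shows the sets of $\cA$ lie in no $2$-chains at all and then derives a contradiction from Sperner's theorem applied to the rest of the family). As written, the proposal defers essentially all of the substantive content back to the appendix it was meant to replace.
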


Assuming this proposition, we shall proceed to prove Theorem \ref{thm:2chains}.

\begin{proof}[Proof of Theorem \ref{thm:2chains}]
We prove the theorem by induction on $s$.

\vspace{0.1in}

For the base case, we take $s = \sperner$.  By Sperner's Theorem, it follows that any family $\cF$ of this size that minimizes the number of $2$-chains must be an antichain.  It is well known that the only antichains of this size are the family of all sets of size $\floor{\frac{n}{2}}$, or the family of sets of size $\ceil{ \frac{n}{2} }$.  It is easy to see that these families are the only ones satisfying Properties $1$ through $4$, with $r = 0$ or $\frac12$ depending on whether $n$ is even or odd respectively.

\vspace{0.1in}

For the induction step, assume $s > \sperner$, and let $\cF$ be an optimal family of size $s$.  Suppose Property 1 were not satisfied.  Since $\cF$ and $\cF' = \{ [n] \setminus F : F \in \cF \}$ have the same number of $2$-chains, we may assume there is a largest set $F \in \cF$ with $|F| = \frac{n}{2} + t$ for some $t > r$.  By Proposition \ref{prop:shifting}, it follows that for every $G \subset F$, $|G| \ge \frac{n}{2} - t + 1$, we have $G \in \cF$.  Hence $F$ is in at least $\sum_{i = 1}^{2t - 1} \binom{\frac{n}{2} + t}{i}$ $2$-chains in $\cF$.  Since $\cF \setminus \{F\}$ is a family of $s-1$ sets, there are at least $c_2(n,s-1)$ $2$-chains in $\cF$ not involving $F$.
Thus the number of $2$-chains in $\cF$ is at least $c_2(n,s-1) + \sum_{i=1}^{2t-1} \binom{\frac{n}{2} + t}{i} > c_2(n,s-1) + \sum_{i=1}^{2r} \binom{\frac{n}{2} + r}{i} \ge
c_2(n,s)$, and so $\cF$ cannot be optimal, giving a contradiction.  Hence if $\cF$ is optimal, each $F \in \cF$ has $\frac{n}{2}
- r \le |F| \le \frac{n}{2} + r$, and so Property 1 is established.

Now consider the case $\sum_{i = \frac{n}{2} - r + 1}^{\frac{n}{2} + r - 1} \binom{n}{i} < s \le \sum_{i = \frac{n}{2} - r}^{\frac{n}{2} + r - 1} \binom{n}{i}$.  Since $s > \sum_{i = \frac{n}{2} - r + 1}^{\frac{n}{2} + r - 1} \binom{n}{i}$, and in light of Property 1, it follows that there exists some $F \in \cF$ with $|F| = \frac{n}{2} \pm r$; by symmetry, we may assume $|F| = \frac{n}{2} + r$.  By Proposition \ref{prop:shifting}, $F$ must be contained in $\sum_{i = 1}^{2r - 1} \binom{\frac{n}{2} + r}{i} = c_2(n,s) - c_2(n,s-1)$ $2$-chains with sets in $\cF$ of sizes between $\frac{n}{2} - r + 1$ and $\frac{n}{2} + r - 1$.  If $F$ is contained in any $2$-chains with sets of size $\frac{n}{2} - r$, then by induction it follows that $\cF$ has more than $c_2(n,s)$ $2$-chains, contradicting the optimality of $\cF$.  Thus $F$ is incomparable to the other sets in $\cF$ of sizes $\frac{n}{2} \pm r$.  Removing $F$, we find that $\cF \setminus \{ F \}$ must also be optimal, and thus Properties 2 and 3 follow.

Finally, suppose $\sum_{i = \frac{n}{2} - r}^{\frac{n}{2} + r - 1} \binom{n}{i} \le s \le \sum_{i = \frac{n}{2} - r}^{\frac{n}{2} + r} \binom{n}{i}$.  By Property 1, we know all sets in $\cF$ have sizes between $\frac{n}{2} - r$ and $\frac{n}{2} + r$.  Let $\cH = \cup_{i = \frac{n}{2} - r}^{\frac{n}{2} + r} \binom{[n]}{i}$ be the family of all subsets of $[n]$ of sizes between $\frac{n}{2} - r$ and $\frac{n}{2} + r$, and let $\cG = \cH \setminus \cF$ be those sets not in $\cF$.  We have $c_2(\cF) = c_2(\cH) - c_2(\cG, \cH) + c_2(\cG)$.  $c_2(\cH)$ depends only on $r$, and hence on $s$, and is independent of the structure of $\cF$.  We have
\[ c_2(\cG, \cH) = \sum_{G \in \cG} c_2( \{ G \}, \cH ) = \sum_{G \in \cG} \left( \sum_{i = \frac{n}{2} - r}^{|G| - 1} \binom{|G|}{i} + \sum_{i = |G| + 1}^{\frac{n}{2} + r} \binom{n - |G|}{i - |G|} \right). \]
The parenthetical term is maximized when $|G| = \frac{n}{2} \pm r$, and so $c_2(\cG,\cH)$ is maximized when for every $G \in \cG$ we have $|G| = \frac{n}{2} \pm r$.  Finally, $c_2(\cG)$ is minimized when $\cG$ is an antichain, in which case $c_2(\cG) = 0$.  Both of these conditions are satisfied by the construction outlined at the beginning of this section, and hence must also be true of any other extremal family.  Thus to minimize $c_2(\cF)$, $\cF$ must contain all sets of sizes between $\frac{n}{2} - r + 1$ and $\frac{n}{2} + r - 1$, and $\cG = \binom{[n]}{\frac{n}{2} + r} \cup \binom{[n]}{\frac{n}{2} - r} \setminus \cF$ should be an antichain, establishing Properties 2 and 4.
This completes the induction step, and with it the proof of Theorem \ref{thm:2chains}.

\end{proof}

\section{Counting $k$-chains} \label{sec:klevels}

We now seek a similar result for $k$-chains, and thus to make some progress on Conjecture \ref{conj:kleitman}.  In this section we verify the conjecture when the number of sets is at most that in the $k$ middle levels, and in the next section we shall extend the result to the $k+1$ middle levels.

\medskip

Note that if we take all $M_{k-1}$ sets in the $k-1$ middle levels, that is of sizes between $\floor{ \frac{n-k}{2}} + 1$ and $\floor{ \frac{n+k}{2}} - 1$, and then add $t$ sets of size $\floor{\frac{n+k}{2}}$, we would create precisely $t \binom{\floor{(n+k)/2}}{k-1} (k-1)!$ $k$-chains.  Hence Theorem \ref{thm:klevels} is tight when $0 \le t \le \binom{n}{\floor{ (n+k)/2 }}$.  We shall in fact prove the following stronger theorem, which provides a stability result.  In the following notation, we let $r \in \left\{ \frac{k-1}{2}, \frac{k}{2} \right\}$ be such that the sets in the $k$ middle levels have sizes between $\frac{n}{2} - r$ and $\frac{n}{2} + r$, and for a set $F$, we define $m(F) = \max \{ \card{F}, n - \card{F} \}$.

\begin{THM} \label{thm:kstability}
Let $\cF$ be a family of subsets of $[n]$.  Then the number of $k$-chains in $\cF$ is bounded by
\[ c_k(\cF) \ge \left( \sum_{ i = n/2 - r}^{n /2 + r} \frac{\card{\cF_i}}{\binom{n}{i}} - (k - 1) \right) \binom{n}{n / 2 + r } \binom{ n / 2 + r }{k - 1} (k-1)! + \sum_{\substack{F \in \cF : \\ \card{ \card{F} - n / 2 } \ge r+1}} \binom{ m(F) }{k-1} (k-1)!. \]
\end{THM}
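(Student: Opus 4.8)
The engine I would use is double counting over the $n!$ maximal chains of $2^{[n]}$, combined with the elementary fact that $\binom{m}{j}\ge m-(j-1)$ for all integers $m\ge 0$, $j\ge 1$ (check $m<j-1$, $m=j-1$, $m\ge j$ separately). For a maximal chain $\mathcal C$ the members of $\cF$ on $\mathcal C$ form a chain, so $\mathcal C$ carries exactly $\binom{\card{\mathcal C\cap\cF}}{k}$ sub-$k$-chains and $\sum_{\mathcal C}\binom{\card{\mathcal C\cap\cF}}{k}=\sum_{S}\mu(S)$, summed over all $k$-chains $S\subseteq\cF$, where $\mu(S)$ is the number of maximal chains through every member of $S$. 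The first case to handle is $\cF$ lying entirely in the window $\{i:\tfrac n2-r\le i\le\tfrac n2+r\}$, which has $2r+1\in\{k,k+1\}$ levels. Then any $k$-chain $S\subseteq\cF$ uses $k$ distinct levels inside $2r+1$ consecutive ones, hence is (almost) saturated; writing $\mu(S)$ as a product of factorials of block sizes summing to $n$ and comparing the handful of cases — the only non-trivial inequality being $2\le\tfrac n2-r+1$, i.e.\ $n\ge k+2$ — one gets $\mu(S)\le\mu^\ast$, where $\mu^\ast$ is defined by $n!/\mu^\ast=\binom{n}{n/2+r}\binom{n/2+r}{k-1}(k-1)!=:T$. (For $2r+1=k$ every $S$ is transversal, $\mu(S)=(\tfrac{n-k+1}2)!\,(\tfrac{n-k+1}2)!=n!/T$ exactly.) Hence, using $\sum_{\mathcal C}\card{\mathcal C\cap\cF}=\sum_{F\in\cF}\card F!\,(n-\card F)!=n!\sum_F 1/\binom n{\card F}$,
\[
c_k(\cF)\ \ge\ \tfrac1{\mu^\ast}\sum_{\mathcal C}\binom{\card{\mathcal C\cap\cF}}{k}\ \ge\ \tfrac1{\mu^\ast}\sum_{\mathcal C}\bigl(\card{\mathcal C\cap\cF}-(k-1)\bigr)\ =\ T\Bigl(\sum_{i=n/2-r}^{n/2+r}\tfrac{\card{\cF_i}}{\binom n i}-(k-1)\Bigr),
\]
which is exactly the asserted bound in this case.

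To incorporate far sets I would peel them off one at a time from the outside in. Let $F\in\cF$ be a far set (so $\bigl|\card F-\tfrac n2\bigr|\ge r+1$) farthest from $\tfrac n2$; complementing the whole family if necessary, $\card F>\tfrac n2+r$. Then nothing in $\cF$ strictly contains $F$, so every $k$-chain through $F$ has $F$ on top, and deleting $F$ removes exactly $c_{k-1}(\cG)$ of them, where $\cG=\{G\in\cF:G\subsetneq F\}$. Running the same chain count inside the subcube $2^F$ (maximal chains of length $\card F$): a $(k-1)$-chain of $\cG$ supported in the window has multiplicity in $2^F$ at most $(\card F-k+1)!$ — again via the estimate $a!\,b!\le(a+b-1)!$ with $a+b=\card F-k+2$ — so, counting only such chains and repeating the argument of the first paragraph,
\[
c_{k-1}(\cG)\ \ge\ \binom{m(F)}{k-1}(k-1)!\;\bigl(W_F-(k-2)\bigr),\qquad W_F:=\sum_{\substack{G\in\cF,\ G\subsetneq F\\ n/2-r\le\card G\le n/2+r}}\frac1{\binom{\card F}{\card G}}.
\]
In particular $F$ genuinely produces at least $\binom{m(F)}{k-1}(k-1)!$ new $k$-chains whenever the window levels below it are full ($W_F=k-1$), matching the term the theorem assigns to $F$; so an induction on $\card\cF$ that removes $F$ carries the bound over, provided $W_F\ge k-1$.

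The remaining, and genuinely hard, point is the case $W_F<k-1$: there we credit $F$ with more chains than it literally spawns, and the deficit $\binom{m(F)}{k-1}(k-1)!\bigl((k-1)-W_F\bigr)$ must be recovered. My plan is to charge it to the window sets missing below $F$: a window level that is short of full drops the global weight $\sum_j\card{\cF_j}/\binom n j$, and that drop, scaled in the main term by the enormous factor $T=n!/\mu^\ast$, should outweigh the far-set deficits (one also still has the unused slack between $c_k(\cF)$ and its first-paragraph lower bound). When even this fails, $\cF$ is so thin near the middle that the only way it can also carry enough far sets to make the bound interesting is to spread them across many far levels — which forces $k$-chains among those levels and makes $c_k(\cF)$ large for trivial reasons, whereas a family with few far sets has $\sum_{F\text{ far}}\binom{m(F)}{k-1}(k-1)!$ easily dominated by $(k-1)T$. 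Turning this dichotomy into a rigorous accounting — tracking, missing-set by missing-set, which part of the $T$-buffer and which part of the slack is spent — is where the real work lies; I expect a somewhat involved case analysis comparing, for each window level $i$ and each far level $j$, the quantities $T/\binom n i$ and $\binom n j\binom j{k-1}(k-1)!/\binom j i$, the worst case being $j=\tfrac n2+r+1$ just outside the window, where these balance up to the constant factor $\tfrac{n/2-r+1}{n/2-r}$. This last step — the combination, not either of the two clean counting lemmas — is the main obstacle.
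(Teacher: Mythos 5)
Your first paragraph reproduces the paper's core argument (the double count over maximal chains with $\binom{s}{k}\ge s-(k-1)$ and the multiplicity bound $\mu(S)\le n!/T$) and is correct as far as it goes. But the theorem is not proved in the paper by splitting into a ``window only'' case plus a far-set correction, and it is exactly in your treatment of far sets that the proposal stops short of a proof. The deficit case $W_F<k-1$ that you flag at the end is a genuine gap, not a technicality: as you yourself observe, when the window levels beneath a far set $F$ are depleted, $F$ may literally lie in fewer than $\binom{m(F)}{k-1}(k-1)!$ $k$-chains, so no peeling argument that charges $F$ only for the chains it actually spawns can produce the credit the theorem assigns to $F$; the compensation must come from the shrunken main term. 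Your proposed missing-set-by-missing-set accounting is only a plan, and I do not see that it closes without a further idea.

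The paper avoids the dichotomy entirely by not peeling. Every far set stays inside the single permutation count; each $k$-chain is assigned to whichever endpoint ($F_1$ or $F_k$) is farther from $n/2$, and a chain assigned to $F$ has multiplicity at most $|F|!\,(n-|F|)!\,/\bigl(\binom{m(F)}{k-1}(k-1)!\bigr)$. After dividing by $n!$ and moving the boundary-level chain count to the left, the contribution of a far set $F$ to the right-hand side is
\[ \frac{1}{\binom{n}{|F|}} - \frac{C(F)}{(k-1)!}\left(\frac{1}{\binom{n}{|F|}\binom{m(F)}{k-1}} - \frac{1}{\binom{n}{n/2+r}\binom{n/2+r}{k-1}}\right), \]
and the identity $\binom{n}{a}\binom{a}{k-1}=\binom{n}{k-1}\binom{n-k+1}{a-k+1}$ shows the parenthesis is nonnegative once $m(F)\ge n/2+r$. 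Hence the expression is minimized when $C(F)$ is as large as possible; and one may assume $C(F)\le\binom{m(F)}{k-1}(k-1)!$, since otherwise deleting $F$ and invoking induction on $|\cF|$ already yields the theorem. Substituting that maximum turns the apparent deficit into exactly the per-set credit $\binom{m(F)}{k-1}\big/\bigl(\binom{n}{n/2+r}\binom{n/2+r}{k-1}\bigr)$ appearing in the statement. The one idea missing from your write-up is this monotonicity-in-$C(F)$ observation, which makes the hard case of your dichotomy evaporate; without it (or a completed version of your accounting) the proposal does not establish the theorem.
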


\begin{proof}[Proof of Theorem \ref{thm:kstability}]
We prove the theorem by induction on $\card{\cF}$.  If $\card{\cF} = 0$, then there is nothing to show, as the desired lower bound is negative.

\vspace{0.1in}

For the induction step, we begin by noting that for every set $F \in \cF$ with $\card{ \card{F} - \frac{n}{2} } \ge r$, $F$ can be in at most $\binom{m(F)}{k-1} (k-1)!$ $k$-chains.  If not, then we could remove $F$, and applying the inductive hypothesis to $\cF \setminus \{F \}$, we would have the desired inequality.

We now use an LYM-type inequality, counting the number of $k$-chains in our family by considering permutations.  We say that a permutation $\sigma \in S_n$ \emph{contains} a set $F \subset [n]$, denoted $F \in \sigma$, if $\{ \sigma(1), \sigma(2), \hdots, \sigma(\card{F}) \} = F$; that is, $F$ is an initial segment of $\sigma$.  Note that if $\sigma$ contains $k$ sets $F_1, F_2, \hdots, F_k$, then those $k$ sets must form a $k$-chain.  For any set $F \subset [n]$, we let $S_n[F] = \{ \sigma \in S_n : F \in \sigma \}.$  Since every permutation containing $m$ sets contributes $m - \binom{m}{k} \le k-1$ to the right-hand side of the sum below, it follows that
\begin{equation} \label{ineq:LYM1}
(k-1) n! \ge \sum_{F \in \cF} \card{S_n[F]} - \sum_{F_1 \subset F_2 \subset \hdots \subset F_k \in \cF} \card{ \cap_{i=1}^k S_n[F_i] }.
\end{equation}

As the second sum is over all $k$-chains in our family $\cF$, this inequality will allow us to bound the number of $k$-chains.  Note that for any $F \in \cF$, we
have $\card{S_n[F]} = \card{F}! \card{[n] \setminus F}!$, and for a $k$-chain $F_1 \subset F_2 \subset \hdots \subset F_k$, $\card{\cap_{i=1}^k S_n[F_i]} =
\card{F_1}! \prod_{i=1}^{k-1} \card{F_{i+1} \setminus F_i }! \card{[n] \setminus F_k}!$ gives the number of permutations containing the $k$-chain.

We shall associate every $k$-chain in $\cF$ with either its minimum or maximum set, depending on which is further away from the middle level.  For $F \in \cF$ with $|F| < \frac{n}{2}$, let
$\cC(F) = \{ F \subset F_2 \subset \hdots \subset F_k : |F| + |F_k| < n \}$, and if $F \in \cF$ with $|F| \ge \frac{n}{2}$, let $\cC(F) = \{ F_1 \subset
\hdots \subset F_{k-1} \subset F : |F_1| + |F| \ge n \}$, and, for convenience, define $C(F) = \card{\cC(F)}$.  Note that we have partitioned the set of
$k$-chains $\{ F_1 \subset F_2 \subset \hdots \subset F_k : F_i \in \cF \}$ into the disjoint sets $\left\{ \cC(F) \right\}_{F \in \cF}$.  We can thus
rewrite inequality (\ref{ineq:LYM1}) as follows:
\begin{equation}
\label{ineq:LYM2}
(k-1) n! \ge \sum_{F \in \cF} \left[ \card{S_n[F]} - \sum_{F_1 \subsetneq F_2 \subsetneq \hdots
\subsetneq F_k \in \cC(F)} \card{ \cap_{i = 1}^k S_n[F_i] } \right].
\end{equation}

To bound $\card{ \cap_{i=1}^k S_n[F_i] }$ appropriately, we require that $\emptyset$ and $[n]$ not be members of our family.  This is given by the following lemma.

\begin{LEMMA} \label{lem:noverylargesets}
If $\card{\cF} \le M_{n-1}$, and $\cF$ minimizes the number of $k$-chains, then $\emptyset \notin \cF$ and $[n] \notin \cF$.
\end{LEMMA}

\begin{proof}
Suppose we had $[n] \in \cF$.  Since $\card{F} \le 2^n - 2$, there must be some $\emptyset \neq F \notin \cF$.  We decrease the number of $k$-chains in $\cF$ by replacing $[n]$ with $F$, since any new $k$-chain involving $F$ was a $k$-chain with $[n]$ before.

Similarly, if $\emptyset \in \cF$, we can replace it with any set $[n] \neq F \notin \cF$.
\end{proof}

Note that $M_{n-1} = 2^n - 2$.  Thus, if $\card{\cF} > M_{n-1}$, then either we have all subsets of $[n]$, or our family is missing just one set, in which case
(as we explained above) it is best to remove either $\emptyset$ or $[n]$.  In either case, the bound in Theorem \ref{thm:kstability} remains true.

\medskip

We now assume $1 \le \card{F} \le n-1$ for all $F \in \cF$.  If we fix $F_1, |F_1|< n/2$, then we maximize $\card{ \cap_{i=1}^k S_n[F_i]} = \card{F_1}! \prod_{i=1}^{k-1} \card{F_{i+1} \setminus F_i}! \card{[n] \setminus F_k}!$ by taking
$\card{F_{i+1} \setminus F_{i}} = 1$ for $1 \le i \le k-1$, and so $\card{\cap_{i=1}^k S_n[F_i] } \le \card{F_1}! \left( n - \card{F_1} - (k-1) \right)!$.  The same holds true if we instead fix $F_k, |F_k| \geq n/2$, and thus $\card{ \cap_{i=1}^k S_n[F_i]} \le \left( \card{F_k} - (k-1) \right)! \left(n - \card{F_k} \right)!$.  We can unify both bounds in the form $\frac{\card{F}! (n-\card{F})!}{\binom{m(F)}{k-1}(k-1)!}$.

Moreover, by definition we must have $\cC(F) = \emptyset$ for any $F$ with $\card{ \card{F} - \frac{n}{2}} \le r-1$.  Hence we split our sum based on how $\card{ \card{F} - \frac{n}{2} }$ compares to $r$. Dividing through by $n!$, inequality \eqref{ineq:LYM2} leads to
\[ k-1 \ge \sum_{F \in \cF} \frac{1}{\binom{n}{\card{F}}} \left( 1 - \frac{C(F)}{\binom{m(F)}{k-1} (k-1)!} \right) = \Sigma_1 + \Sigma_2 + \Sigma_3, \]
where
\[ \Sigma_1 = \sum_{\substack{ F \in \cF \\ \card{ \card{F} - n / 2 } \le r - 1}}  \frac{1}{\binom{n}{\card{F}}} = \sum_{i = n/2 - r + 1}^{n/2 + r - 1} \frac{ \card{\cF_i}}{\binom{n}{i}}, \]
\[ \Sigma_2 = \sum_{\substack{ F \in \cF \\ \card{ \card{F} - n/2 } = r}}  \frac{1}{\binom{n}{n/2 + r}} \left( 1 - \frac{C(F)}{\binom{n/2 + r}{k-1} (k-1)!} \right) = \frac{\card{\cF_{n/2 - r}} + \card{\cF_{n/2 + r}}}{\binom{n}{n/2 + r}} - \frac{ \sum_{\substack{F \in \cF \\ \card{ \card{F} - n/2} = r}} C(F) }{\binom{n}{n/2 + r} \binom{n/2 + r}{k-1} (k-1)!}, \]
and
\[ \Sigma_3 = \sum_{\substack{ F \in \cF \\ \card{\card{F} - n / 2} \ge r + 1}} \frac{1}{\binom{n}{\card{F}}} \left( 1 - \frac{C(F)}{\binom{m(F)}{k-1}} \right). \]

Note that $\sum_{\card{\card{F} - n/2}= r} C(F) = c_k(\cF) - \sum_{\card{ \card{F} - n/2} \ge r+1} C(F)$, and so, substituting in $\Sigma_2$, we obtain
\begin{align*}
	\frac{c_k(\cF)}{\binom{n}{n/2 + r} \binom{n/2+r}{k-1} (k-1)!} + k-1 &\ge \sum_{i = n/2-r}^{n/2 + r} \frac{\card{\cF_i}}{\binom{n}{i}} \\
	&\quad + \sum_{\substack{F \in \cF : \\ \card{ \card{F} - n/2 } \ge r + 1}} \left( \frac{1}{\binom{n}{\card{F}}} - \frac{C(F)}{(k-1)!} \left( \frac{1}{\binom{n}{\card{F}} \binom{m(F)}{k-1}} - \frac{1}{\binom{n}{n/2 + r} \binom{n/2 + r}{k-1}} \right) \right).
\end{align*}

Now, since $\binom{n}{a} \binom{a}{k-1} = \binom{n}{k-1} \binom{n-k+1}{a - k + 1}$, it follows that when $m(F) \ge \frac{n}{2} + r$, $\binom{n}{\card{F}} \binom{m(F)}{k-1} \le \binom{n}{n/2 + r} \binom{n/2 + r}{k-1}$, as $\frac{1}{2} (n - k + 1) \le \frac{n}{2} + r - k + 1 \le m(F) - k + 1$.  Hence the summand is minimized when $C(F)$ is as large as possible, which, by our inductive hypothesis, is $\binom{m(F)}{k-1} (k-1)!$.  Substituting this into the inequality above gives
\[ \frac{c_k(\cF)}{\binom{n}{n/2 + r} \binom{n/2 + r}{k-1} (k-1)!} + k - 1 \ge \sum_{i=n/2 - r}^{n/2 + r} \frac{\card{\cF_i}}{\binom{n}{i}} + \sum_{\substack{F \in \cF : \\ \card{ \card{F} - n/2 } \ge r+1}} \frac{\binom{m(F)}{k-1}}{\binom{n}{n/2 + r} \binom{n/2 + r}{k-1}}. \]

Rearranging gives the desired bound
\[ c_k(\cF) \ge \left( \sum_{i = n/2 - r}^{n/2 + r} \frac{ \card{\cF_i}}{\binom{n}{i}} - (k-1) \right) \binom{n}{n/2 + r} \binom{n/2 + r}{k-1} (k-1)! + \sum_{\substack{F \in \cF : \\ \card{\card{F} - n / 2 } \ge r + 1}} \binom{m(F)}{k-1} (k-1)!. \]
\end{proof}

Given Theorem \ref{thm:kstability}, it is easy to deduce Theorem \ref{thm:klevels}.  In fact, we are able to characterize all extremal families.

\begin{proof}[Proof of Theorem \ref{thm:klevels}]
Suppose we have a family of sets $\cF$, with $\card{\cF} = M_{k-1} + t$.  Note that the contribution each set $F \in \cF$ makes to the right-hand side above is $\frac{\binom{n}{n/2 + r}}{ \binom{n}{\card{F}}} \binom{n/2 + r}{k-1} (k-1)!$ if $\frac{n}{2} - r \le \card{F} \le \frac{n}{2} + r$, and $\binom{m(F)}{k-1} (k-1)!$ otherwise.  This contribution increases with $\card{ \card{F} - \frac{n}{2}}$, and so to minimize the right-hand size we need all sets to satisfy
$\card{ \card{F} - \frac{n}{2}} \leq r$. Moreover, since the binomial coefficients $\binom{n}{i}$ are minimized over $n/2 - r \le i \le n/2 + r$ when
$i=n/2 \pm r$, any extremal family must contain all sets of sizes between $\frac{n}{2} - r + 1$ and $\frac{n}{2} + r - 1$, with the remaining sets having size $\frac{n}{2} \pm r$.  It is easy to see that such a collection of sets gives $c_k(\cF) \ge t \binom{n/2 + r}{k-1} (k-1)!$ above, as required.

To classify the extremal families, note that we already know we must have all sets of sizes between $\frac{n}{2} - r + 1$ and $\frac{n}{2} + r - 1$.  If $r = \frac{k}{2}$, this consists of the middle $k-1$ levels, giving $M_{k-1}$ sets.  Hence we have $t$ sets of size $\frac{n}{2} \pm r$.  To obtain equality in \eqref{ineq:LYM1}, we must have every chain passing through either $k-1$ or $k$ sets of $\cF$.  As all the sets in the $k-1$ middle levels are in $\cF$, the chain can contain at most one set from $\cF$ of size $\frac{n}{2} \pm r$.  From this, we deduce that $\{ F \in \cF : \card{F} = \frac{n}{2} \pm r\}$ must be an antichain.

If $r = \frac{k-1}{2}$, we know the middle $k-2$ levels are full, and we have $\binom{n}{n/2 + r} + t$ sets of size $\frac{n}{2} \pm r$.  In order to obtain equality in \eqref{ineq:LYM1}, we must therefore have every chain pass through at least one set in $\cF$ of size $\frac{n}{2} \pm r$.  Hence $\{ G \notin \cF : \card{G} = \frac{n}{2} \pm r \}$ must form an antichain.

In particular, we note that the extremal families are exactly the same as for Theorem \ref{thm:2chains}.
\end{proof}

We remark that Theorem \ref{thm:kstability} is a stability result for Theorem \ref{thm:klevels}, as our bound on $c_k(\cF)$ increases if we are missing sets with $\card{ \card{F} - \frac{n}{2} } \le r-1$, or have sets with $\card{ \card{F} - \frac{n}{2} } \ge r+1$.  Moreover, the stability estimates we obtain can also be tight.  For instance, if we replace $\ell$ sets of size $\frac{n}{2} \pm r$ with sets of size $\frac{n}{2} + r + 1$, Theorem \ref{thm:kstability}, together with some simple computations, shows that we should gain at least $\ell \binom{n/2 + r}{k-2} (k-1)!$ extra $k$-chains.
Moreover, it is easy to check that we gain precisely that many $k$-chains in the case when our family includes all of the $\left( \frac{n}{2} + r \right)$-sets and none of the $\left( \frac{n}{2} - r \right)$-sets in the shadow of the $\left( \frac{n}{2} + r + 1 \right)$-sets which were added.

Similarly, if we replace $\ell$ sets of size $\frac{n}{2} + r - 1$ with sets of size $\frac{n}{2} + r$, the theorem shows that we must gain at least $\frac{2r - 1}{n/2 + r} \ell \binom{n/2 + r}{k-1} (k-1)!$ extra $k$-chains. This is tight again, if our family includes all the sets of size $\frac{n}{2} + r$ containing any of the replaced sets.  As we remarked earlier, these stability results are new even in the case $k=2$.

\section{Larger families} \label{sec:k+1levels}

While Theorem \ref{thm:klevels} provides a tight bound on the number of $k$-chains appearing in families contained within the $k$ middle levels, it underestimates the number of $k$-chains appearing in larger families.  This is because in our calculations we assumed every $k$-chain had steps ($F_i \setminus F_{i-1}$) of size $1$, as this maximizes the number of permutations containing the $k$-chain.  However, when we are working with the $k+1$ middle levels, we also have $k$-chains with a larger step of size $2$, and so we shall have to make our argument more robust in order to handle these chains.

However, there is one additional difficulty.  Recall that the number of permutations containing a $k$-chain $F_1 \subset F_2 \subset \hdots \subset F_k$ is given by $\card{\cap_{i=1}^k S_n[F_i]} = \card{F_1}! \prod_{i=1}^{k-1} \card{ F_{i+1} \setminus F_i }! \left( n - \card{F_k} \right)!$.  If we fix $F_k$, say, then we would hope that for $k$-chains involving a larger step, the largest this can be is to have one step of size $2$, and have all the other steps have size $1$, as this is precisely the type of $k$-chain that appears in our extremal families.  Such $k$-chains are in $2 ( \card{F_k} - k )! (n - \card{F_k})!$ permutations.

Unfortunately, a chain with $k-2$ steps of size $1$ and one step of size $\card{F_k} - k + 1$ (so that $\card{F_1} = 1$) is contained in $( \card{F_k} - k + 1)! (n - \card{F_k})!$ permutations, which is larger than the bound we require.  However, recall that we assign $k$-chains to either $F_1$ or $F_k$, depending on which is further from the middle.  Thus, unless $\card{F_k} = n-1$, we would assign the above $k$-chain to $F_1$, and so we would be fixing $F_1$ and not $F_k$.  Hence the one case we need to avoid is having a $k$-chain starting with a set of size $1$ and ending with a set of size $n-1$.

We shall later provide a separate argument to show that there cannot be any sets of size $n-1$ in an extremal family, thus bypassing this problem.  In the meanwhile, for the purposes of our stability results, we shall assume there are no sets of size $n-1$.  We now require two arguments - one to bound the number of $k$-chains with larger steps, and one to bound the total number of $k$-chains.

We introduce the following notation for the remainder of this section.  Given $n$ and $k$, we let $a = \ceil{ \frac{n+k}{2} }$, so that the $k$ middle levels are those sets of sizes between $a-k$ and $a-1$, and the $(k+1)$st middle level has sets of size $a$.  For a set family $\cF$, we let $\cC(\cF)$ be the set of $k$-chains in $\cF$.  We partition these into two subsets: $\cC_1(\cF)$ are those $k$-chains $F_1 \subset F_2 \subset \hdots \subset F_k$ with $\card{F_{i+1} \setminus F_i} = 1$ for all $1 \le i \le k-1$, and $\cC_2(\cF) = \cC(\cF) \setminus \cC_1(\cF)$ those $k$-chains with a larger step.  We let $C(\cF)$, $C_1(\cF)$ and $C_2(\cF)$ denote the number of $k$-chains in these subsets respectively.  As in Theorem \ref{thm:klevels}, we will again identify a $k$-chain with one of its endpoints $F_1$ or $F_k$, depending which is further from the middle level, giving the partition $\{ \cC(F) \}_{F \in \cF}$ of $\cC(\cF)$.  These sets will again be partitioned into $\cC_1(F)$ and $\cC_2(F)$, depending on whether or not the $k$-chains have a step of size at least $2$.  Finally, $C(F)$, $C_1(F)$ and $C_2(F)$ represent the sizes of the corresponding sets of $k$-chains.

\subsection{Counting $k$-chains with larger steps} \label{subsec:largesteps}

We begin by showing that large families must contain a number of $k$-chains with a step of size at least $2$.  The following proposition also provides some stability, which we shall require to show that an extremal family cannot contain any sets of size $n-1$.

\begin{PROP} \label{prop:stepchains}
Let $\cF$ be a set family of size $\card{\cF} = M_k + t_1$, with $1 \le \card{F} \le n-2$ for all $F \in \cF$, and with at least $t_2$ sets missing from the middle $k-1$ levels.  Then
\[ C_2(\cF) \ge \left( t_1 + \left( \frac{k-1}{a} \right) t_2 \right) \binom{a}{k} \binom{k}{2} (k-1)!. \]
\end{PROP}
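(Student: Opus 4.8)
The plan is to compare $\cF$ against the family $\cH$ of \emph{all} subsets of $[n]$ with size in the $k+1$ middle levels (sizes $a-k$ through $a$), and track how $\cC_2$-chains decay as one passes from $\cH$ to $\cF$. Write $\cF=(\cH\setminus\mathcal{D})\cup\cA$, where $\mathcal{D}\subseteq\cH$ consists of the sets of $\cH$ missing from $\cF$, and $\cA$ consists of the sets of $\cF$ lying outside the $k+1$ middle levels. Since $\card{\cH}=M_{k+1}=M_k+\binom{n}{a}$ and $\card{\cF}=M_k+t_1$, we have $\card{\mathcal{D}}-\card{\cA}=\binom{n}{a}-t_1$; and the hypothesis says $\mathcal{D}$ contains at least $t_2$ sets of sizes $a-k+1,\dots,a-1$.

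First I would analyse $\cC_2(\cH)$. Any $k$-chain inside $\cH$ uses $k$ of the $k+1$ available levels; if it omits one of the extreme levels $a-k$ or $a$ it is saturated, so every chain of $\cC_2(\cH)$ omits an \emph{interior} level $j\in\{a-k+1,\dots,a-1\}$, has exactly one step of size $2$, and runs from level $a-k$ to level $a$. Partitioning $\cC_2(\cH)$ by its top, each of the $\binom{n}{a}$ sets of size $a$ is the top of exactly $N^*:=\binom{a}{k}\binom{k}{2}(k-1)!$ such chains (choose the $k$ elements removed on the way down to level $a-k$, then split the removal into $k-1$ steps one of size $2$), so $C_2(\cH)=\binom{n}{a}N^*$. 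Next I would compute, for $G\in\cH$, the number $N_{|G|}$ of chains of $\cC_2(\cH)$ through $G$: one finds $N_{|G|}\le N^*$ always (with equality for $|G|=a$, and for $|G|=a-k$ when $n+k$ is even), while for an interior size $|G|$ a short computation gives $N_{|G|}\le\tfrac{a-k+1}{a}N^*$, i.e.\ $N^*-N_{|G|}\ge\tfrac{k-1}{a}N^*$. This last inequality is the crux; after cancelling binomials it reduces to $(k-2)(n-k+2)\le(k-1)(n-k+2)$, which holds with room to spare.

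Now I would assemble the lower bound from two contributions. The chains of $\cC_2(\cH)$ avoiding every set of $\mathcal{D}$ still lie in $\cC_2(\cF)$, and by a union bound there are at least $C_2(\cH)-\sum_{G\in\mathcal{D}}N_{|G|}$ of them; since $N^*-N_{|G|}\ge0$ for all $G$ and $\ge\tfrac{k-1}{a}N^*$ for interior $G$, and using $\binom{n}{a}-\card{\mathcal{D}}=t_1-\card{\cA}$,
\[ C_2(\cH)-\sum_{G\in\mathcal{D}}N_{|G|}\ \ge\ \binom{n}{a}N^*-\card{\mathcal{D}}\,N^*+t_2\cdot\tfrac{k-1}{a}N^*\ =\ (t_1-\card{\cA})N^*+t_2\cdot\tfrac{k-1}{a}N^*. \]
Secondly, each $A\in\cA$ lies outside the $k+1$ middle levels, so it is the extreme endpoint of at least $N^*$ new chains of $\cC_2(\cF)$ — for instance $k$-chains with $A$ as their outer set, one step of size $2$, and all other members in the surviving part $\cH\setminus\mathcal{D}$; this is where $n\ge15$ and $k\le n-6$ are used, to guarantee enough room beneath (above) $A$. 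These families of new chains, indexed by $\cA$, are disjoint, contributing at least $\card{\cA}N^*$. Summing the two contributions cancels the $\card{\cA}$ term and gives $C_2(\cF)\ge\bigl(t_1+\tfrac{k-1}{a}t_2\bigr)N^*$, as required.

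The step I expect to be the main obstacle is the second contribution: showing that \emph{every} set outside the $k+1$ middle levels genuinely forces $N^*$ new $\cC_2$-chains, \emph{even after} the deletions in $\mathcal{D}$ have thinned the levels below (or above) it. One must rule out the case where $\mathcal{D}$ has decimated the relevant part of $\cH$; the clean way is probably to observe that if $\mathcal{D}$ is that large then $t_2$ is correspondingly large and the first contribution already dominates, but turning this into a crisp argument — together with the routine but slightly delicate binomial estimate $N^*-N_{|G|}\ge\tfrac{k-1}{a}N^*$ for interior levels, and the reduction to the hypothesis $1\le\card{F}\le n-2$ to exclude the one problematic chain shape flagged above — is where the work lies.
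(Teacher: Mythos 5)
Your plan is genuinely different from the paper's. The paper proves this proposition by the same permutation (LYM-type) count used for Theorem \ref{thm:kstability}, modified so that a permutation whose initial segments meet $\cF$ in $s$ sets is charged only for the $\binom{s}{k}-(s-k+1)$ non-consecutive $k$-subsets of that $s$-chain (these are exactly the large-step chains), combined with an induction on $t_1+t_2$ in which any set lying in more than $\binom{a}{k}\binom{k}{2}(k-1)!$ large-step chains is deleted outright. Your first half is correct and verifiable: every chain of $\cC_2(\cH)$ does run from level $a-k$ to level $a$ with one step of size $2$, $C_2(\cH)=\binom{n}{a}N^*$, and the per-level counts satisfy $N_{|G|}\le N^*$ with $N^*-N_{|G|}\ge\frac{k-1}{a}N^*$ at interior levels (your reduction of that inequality checks out). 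Note, though, that $n\ge 15$ and $k\le n-6$ are not hypotheses of this proposition — they enter only in Proposition \ref{prop:nolargesets} — so you may not invoke them.

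The gap is the one you flagged, and it is fatal to the argument as written, not a routine verification. The claim that every $A\in\cA$ is the outer endpoint of at least $N^*$ chains of $\cC_2(\cF)$ with all other members in $\cH\setminus\mathcal{D}$ is false: take $k=3$, $\cA=\{A_0\}$ with $\card{A_0}=a+1$, and let $\mathcal{D}$ contain \emph{all} subsets of $A_0$ of sizes $a-3,\dots,a$. Then $A_0$ lies in no such chain, your second contribution is $0$ rather than $\card{\cA}N^*$, and the two-part accounting returns $\bigl(t_1-\card{\cA}+\tfrac{k-1}{a}t_2\bigr)N^*$, short of the target by exactly $\card{\cA}N^*$. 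The proposition still holds in this example, but only because your union bound is far from tight there: chains meeting $\mathcal{D}$ several times are subtracted several times, and the true slack $N^*-N_{|G|}$ at interior levels is of order $N^*/2$, far exceeding the $\tfrac{k-1}{a}N^*$ you retain. Your proposed fallback — ``if $\mathcal{D}$ is large then $t_2$ is large and the first contribution dominates'' — does not close this, because the entire interior-deletion credit $\tfrac{k-1}{a}N^*$ per set is already consumed by the $t_2$-term of the conclusion; a repair must locally charge the deficit of each $A$ against the specific deletions above or below it (via inclusion--exclusion or the unexploited slack just described), and that is a genuinely new accounting step, not bookkeeping. This is precisely the difficulty the paper's method avoids: in the LYM inequality a set lying in \emph{few} large-step chains only strengthens the bound, and the only thing to control is sets lying in \emph{too many}, which the induction on $t_1+t_2$ dispatches by deletion.
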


\begin{proof}
We prove the statement by induction on $t_1 + t_2 \ge 0$, noting that we must have $t_2 \ge 0$.  The base case of $t_1 + t_2 = 0$ is trivial, as in this case the right-hand side is non-positive.

\medskip

The proof will now run along very similar lines to that of Theorem \ref{thm:klevels}, and we shall just make a few changes to count only those chains with a large step.  To begin with, when we are counting sets and $k$-chains in permutations, we only want to consider those $k$-chains with a large step.  To ensure this, we shall not count $k$-chains that appear consecutively in some permutation.  That is, if $\sigma \in S_n$ contains the sets $F_1 \subset F_2 \subset \hdots \subset F_s$ for some $s \ge k+1$, we will count $F_1 \subset F_2 \subset \hdots \subset F_{k-1} \subset F_{k+1}$, but not $F_1 \subset F_2 \subset \hdots \subset F_k$.  Thus every $k$-chain we consider is bound to have some step of size at least $2$.  If $s \ge k$, then the number of such chains is $\binom{s}{k} - \left( s - (k-1) \right)$, and since $(k-1)s - \left( \binom{s}{k} - \left( s - (k-1) \right) \right) \le k^2 - k$, it follows that
\begin{align*}
	(k^2 - k) n! &\ge (k-1) \sum_{F \in \cF} \card{S_n[F]} - \sum_{F_1 \subset \hdots \subset F_k \in \cC_2(\cF)} \card{ \cap_{i=1}^k S_n[F_i]} \\
	&\ge \sum_{F \in \cF} \left( (k-1) \card{S_n[F]} - \sum_{F_1 \subset \hdots \subset F_k \in \cC_2(F)} \card{ \cap_{i=1}^k S_n[F_i]} \right).
\end{align*}

As before, we now seek to maximize the terms $\card{\cap_{i=1}^k S_n[F_i]}$.  Provided we have $1 \le \card{F} \le n-2$ for all sets $F \in \cF$, if we fix one of the endpoints of the chain, the number of permutations it is contained in is maximized when we have one step of size $2$, and all the other steps of size $1$.
Thus we can bound $\card{\cap_{i=1}^k S_n[F_i]}$ by $2|F_1|!(n-|F_1|-k)!$ or $2(|F_k|-k)!(n-|F_k|)!$. Dividing through by $n!$, we have that
\[ k^2 - k \ge \sum_{F \in \cF} \frac{1}{\binom{n}{\card{F}}} \left( k-1 - \frac{C_2(F)}{\binom{m(F)}{k} \frac{k!}{2}} \right). \]

Now, by definition, we must have $C_2(F) = 0$ for all sets $F$ in the $k$ middle levels; that is, with $a-k \le \card{F} \le a-1$.  Let $\hat{\cF} = \{ F \in \cF : \card{F} \le a - k - 1 \textrm{ or } \card{F} \ge a \}$ be those sets outside the middle $k$ levels.  Thus
\[ k^2 - k \ge \sum_{i = a-k}^{a-1} \frac{(k-1) \card{\cF_i}}{\binom{n}{i}} + \sum_{F \in \hat{\cF}} \frac{1}{\binom{n}{\card{F}}} \left( k-1 - \frac{C_2(F)}{\binom{m(F)}{k} \frac{k!}{2} } \right). \]

We may assume that for every set $F \in \hat{\cF}$, $C_2(F) \le \binom{a}{k} \binom{k}{2} (k-1)! = (k-1) \binom{a}{k} \frac{k!}{2}$, since otherwise we may remove $F$ from $\cF$ and are then done by induction.  Hence, since $m(F) \ge a$ for all $F \in \cF$, the parenthetical term in the second sum is always non-negative, and so the right-hand side is minimized by replacing $\binom{n}{\card{F}}$ by $\binom{n}{a}$, giving
\begin{align*}
	k^2 - k &\ge  \sum_{i = a-k}^{a-1} \frac{(k-1) \card{\cF_i}}{\binom{n}{i}} + \sum_{F \in \hat{\cF}} \frac{1}{\binom{n}{a}} \left( k-1 - \frac{C_2(F)}{\binom{m(F)}{k} \frac{k!}{2} } \right) \\
	&\ge \sum_{i = a-k}^{a-1} \frac{(k-1) \card{\cF_i}}{\binom{n}{i}} + \sum_{F \in \hat{\cF}} \frac{1}{\binom{n}{a}} \left( k-1 - \frac{C_2(F)}{\binom{a}{k} \frac{k!}{2} } \right) \\
	&= \sum_{i = a-k}^{a-1} \frac{(k-1) \card{\cF_i}}{\binom{n}{i}} + \frac{(k-1) | \hat{\cF} | }{\binom{n}{a}} - \frac{C_2(\cF)}{\binom{n}{a} \binom{a}{k} \frac{k!}{2}},
\end{align*}
and so
\[ \frac{C_2(\cF)}{\binom{n}{a} \binom{a}{k} \frac{k!}{2} } \ge \sum_{i = a-k}^{a-1} \frac{(k-1) \card{\cF_i}}{\binom{n}{i}} - (k^2 - k) + \frac{(k-1) | \hat{\cF} | }{\binom{n}{a}}. \]

If the middle $k$ levels were full, then the first sum would equal $k^2 - k$.  Since we must have at least $t_2$ sets missing from the middle $k-1$ levels, the right-hand side is minimized when there are exactly $t_2$ sets missing, all of size $a-1$.  In this case, $| \hat{\cF} | = t_1 + t_2$, giving
\[ \frac{C_2(\cF)}{\binom{n}{a} \binom{a}{k} \frac{k!}{2} } \ge \frac{(k-1)t_1}{\binom{n}{a}} + (k-1)t_2 \left( \frac{1}{\binom{n}{a}} - \frac{1}{\binom{n}{a-1}} \right) = \left( t_1 + \left( \frac{2a-n-1}{a} \right) t_2 \right) \frac{k-1}{\binom{n}{a}}. \]

As $a = \ceil{\frac{n+k}{2}}$, we have $2a - n - 1 \ge k - 1$, and so multiplying through by $\binom{n}{a} \binom{a}{k} \frac{k!}{2}$ gives the desired bound.
\end{proof}

\subsection{Counting all $k$-chains}

As Proposition \ref{prop:stepchains} offers us some control over the number of chains with large steps, we can now proceed to bound the total number of $k$-chains in $\cF$.  Again, our result provides somes stability, as we shall require to forbid sets of size $n-1$.

\begin{THM} \label{thm:allchains}
Let $\cF$ be a set family of size $\card{\cF} = M_k + t_1$, with $1 \le \card{F} \le n-2$ for all $F \in \cF$, and with at least $t_2$ sets missing from the middle $k-1$ levels.  Then
\[ C(\cF) \ge \binom{n}{a-k} \binom{n-a+k}{k-1} (k-1)! + \left( t_1 + \left( \frac{k-1}{a} \right) t_2 \right) \left( \binom{a}{k-1} + \binom{a}{k} \binom{k}{2} \right) (k-1)!. \]
\end{THM}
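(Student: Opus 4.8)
Write $\tau = t_1 + \tfrac{k-1}{a}\,t_2$ and $P = \binom{n}{a-k}\binom{n-a+k}{k-1}(k-1)!$; note that when $n+k$ is even one has $n-a+k=a$ and $\binom{n}{a-k}=\binom{n}{a}$, so $P=\binom{n}{a}\binom{a}{k-1}(k-1)!$, and the target inequality reads $C(\cF)\ge P+\tau\bigl(\binom{a}{k-1}+\binom{a}{k}\binom{k}{2}\bigr)(k-1)!$. The plan is to combine three estimates. First, Proposition \ref{prop:stepchains} directly gives $C_2(\cF)\ge S$, where $S:=\tau\binom{a}{k}\binom{k}{2}(k-1)!$. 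Second, I would bound the number of \emph{$(k+1)$-chains} in $\cF$ from below: since $M_k$ is the size of the largest $(k+1)$-chain-free family, and every set missing from the middle $k-1$ levels is also missing from the middle $k$ levels, Theorem \ref{thm:kstability} applied with $k+1$ in place of $k$ — together with the same elementary lower bound on $\sum_i\card{\cF_i}/\binom{n}{i}$ used in Proposition \ref{prop:stepchains} — yields $c_{k+1}(\cF)\ge \tau\binom{a}{k}k!$. Using $\binom{a}{k}k!=a(a-1)\cdots(a-k+1)=(a-k+1)\binom{a}{k-1}(k-1)!$ and $S=\tau\cdot\tfrac{(k-1)(a-k+1)}{2}\binom{a}{k-1}(k-1)!$, this rearranges to $\tfrac{k-1}{a-k+1}\,c_{k+1}(\cF)\ge \tau(k-1)\binom{a}{k-1}(k-1)! = \tfrac{2S}{a-k+1}$.

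The third and main estimate is a permutation count for all $k$-chains, which I would run by induction on $t_1+t_2$ with trivial base case. In the inductive step, if some $F\in\cF$ lying outside the $k+1$ middle levels is contained in at least $\bigl(\binom{a}{k-1}+\binom{a}{k}\binom{k}{2}\bigr)(k-1)!$ $k$-chains, then I delete it and apply the inductive hypothesis to $\cF\setminus\{F\}$ (which has $M_k+(t_1-1)$ sets and still at least $t_2$ sets missing from the middle $k-1$ levels); so I may assume every set of $\cF$ has size in $\{a-k,\dots,a\}$. Then every $\sigma\in S_n$ has at most $k+1$ initial segments in $\cF$, so $m(\sigma)-\binom{m(\sigma)}{k}+(k-1)\binom{m(\sigma)}{k+1}\le k-1$, with equality when $m(\sigma)\in\{k,k+1\}$. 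Summing over $\sigma$ and rewriting in set language (a permutation with $m$ initial segments in $\cF$ contains $\binom{m}{k}$ $k$-chains and $\binom{m}{k+1}$ $(k+1)$-chains) gives
\[ (k-1)n! \ \ge\ \sum_{F\in\cF}\card{S_n[F]} - \sum_{k\text{-chains}}\card{\bigcap_{i=1}^{k}S_n[F_i]} + (k-1)\sum_{(k+1)\text{-chains}}\card{\bigcap_{i=1}^{k+1}S_n[F_i]}. \]
As in Section \ref{sec:klevels}, I would bound $\card{\bigcap_i S_n[F_i]}$ for a $k$-chain through the endpoint $F$ assigned to it: it equals $\card{F}!(n-\card{F})!/\bigl(\binom{m(F)}{k-1}(k-1)!\bigr)$ for a chain in $\cC_1(F)$, and is at most $\card{F}!(n-\card{F})!/\bigl(\binom{m(F)}{k}\tfrac{k!}{2}\bigr)$ for one in $\cC_2(F)$ — the latter precisely because, once sets of size $n-1$ are excluded (as they are, since $\card F\le n-2$), a single step of size $2$ is extremal. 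Every $(k+1)$-chain now present is tight with bottom size $a-k$, hence lies in $(a-k)!(n-a)!$ permutations. Dividing by $n!$, using that $C_1(F)=C_2(F)=0$ for $F$ in the middle $k-1$ levels while for $\card{F}\in\{a-k,a\}$ the products $\binom{n}{\card F}\binom{m(F)}{k-1}$ and $\binom{n}{\card F}\binom{m(F)}{k}$ take their maximal values $\binom{n}{a}\binom{a}{k-1}$ and $\binom{n}{a}\binom{a}{k}$, and the bound $\sum_i\card{\cF_i}/\binom{n}{i}-(k-1)\ge 1+\tau/\binom{n}{a}$, the display rearranges to
\[ C_1(\cF) + \frac{2}{a-k+1}\,C_2(\cF) \ \ge\ P + \tau\binom{a}{k-1}(k-1)! + \frac{k-1}{a-k+1}\,c_{k+1}(\cF). \]

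Finally, I would write $C(\cF)=\bigl(C_1(\cF)+\tfrac{2}{a-k+1}C_2(\cF)\bigr)+\bigl(1-\tfrac{2}{a-k+1}\bigr)C_2(\cF)$; since $a-k+1\ge 4$ (as $k\le n-6$) the second bracket is non-negative. Feeding in the last display, then $\tfrac{k-1}{a-k+1}c_{k+1}(\cF)\ge\tfrac{2S}{a-k+1}$ from the second estimate and $C_2(\cF)\ge S$ from the first, gives
\[ C(\cF) \ \ge\ P + \tau\binom{a}{k-1}(k-1)! + \frac{2S}{a-k+1} + \Bigl(1-\frac{2}{a-k+1}\Bigr)S \ =\ P + \tau\binom{a}{k-1}(k-1)! + S, \]
which is exactly the claimed bound.

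The easy part of this plan is the closing telescoping; the work is all in the third estimate. I expect the main obstacle to be the reduction to families confined to the $k+1$ middle levels — that is, handling sets far from the middle that are \emph{not} in enough $k$-chains to be deleted outright. As in Theorem \ref{thm:kstability} and Proposition \ref{prop:stepchains}, these must be absorbed into the permutation count via the inductive hypothesis, and this is where the quantitative hypotheses $n\ge 15$ and $k\le n-6$ are needed, together with the separate shifting argument ruling out sets of size $n-1$ in an extremal family. A secondary difficulty is the parity of $n+k$: the identifications $P=\binom{n}{a}\binom{a}{k-1}(k-1)!$, $\binom{n}{a-k}=\binom{n}{a}$, and the exact location of the maxima of $\binom{n}{\card F}\binom{m(F)}{k-1}$ used above hold on the nose only when $n+k$ is even, and the odd case requires the analogous, slightly asymmetric, computations.
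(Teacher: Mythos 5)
Your route is genuinely different from the paper's: instead of a single first-order permutation count repaired by two technical lemmas, you run a second-order count that also tracks $(k+1)$-chains (via $m-\binom{m}{k}+(k-1)\binom{m}{k+1}\le k-1$) and then recover the $\binom{a}{k}\binom{k}{2}$ term by combining a lower bound on $c_{k+1}(\cF)$ with Proposition \ref{prop:stepchains}. The closing algebra is right: since $S=\tau\binom{a}{k}\binom{k}{2}(k-1)!=\tfrac{k-1}{2}\tau\binom{a}{k}k!$, one indeed has $\tfrac{k-1}{a-k+1}\,\tau\binom{a}{k}k!=\tfrac{2S}{a-k+1}$, and the three estimates telescope to the claimed bound. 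But there is a genuine gap at the step ``so I may assume every set of $\cF$ has size in $\{a-k,\dots,a\}$.'' Your deletion argument only disposes of sets lying in \emph{too many} $k$-chains; a set far from the middle that happens to lie in few $k$-chains cannot be deleted, and nothing forces it out of the family. This is not cosmetic: your counting inequality fails as soon as some permutation contains $k+2$ initial segments of $\cF$, since for $m=k+2$ one computes $m-\binom{m}{k}+(k-1)\binom{m}{k+1}=\tfrac{(k+2)(k-1)}{2}>k-1$ for every $k\ge 2$. So even one set outside the $k+1$ middle levels invalidates your basic display, not merely the later optimization of summands. The paper sidesteps this by using the weaker inequality $ks-\binom{s}{k}\le k^2-1$, valid for \emph{all} $s$, and then absorbing out-of-range sets through Lemmas \ref{lem:c1bound} and \ref{lem:c2bound}, where the inductive cap $C(F)\le\bigl(\binom{a}{k-1}+\binom{a}{k}\binom{k}{2}\bigr)(k-1)!$ is used as an upper bound \emph{inside the summand} rather than as a licence to delete $F$. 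You flag this obstacle yourself, but repairing it within your framework would require either reproving the analogues of those lemmas (at which point you have essentially the paper's proof) or a new idea for permutations with many initial segments.

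Two smaller points. First, the bound $c_{k+1}(\cF)\ge\tau\binom{a}{k}k!$ is only sketched: the $t_1$ part is Theorem \ref{thm:klevels} with $k+1$ in place of $k$, but the $\tfrac{k-1}{a}t_2$ refinement requires rerunning the argument of Proposition \ref{prop:stepchains} for the $(k+1)$-level LYM inequality, tracking where the missing sets sit. Second, when $n+k$ is odd the tight $(k+1)$-chains occupy $(a-k)!(n-a)!$ permutations, so the coefficient of $c_{k+1}(\cF)$ in your third estimate comes out as $\tfrac{k-1}{n-a+1}$ rather than $\tfrac{k-1}{a-k+1}$; since $n-a+1\le a-k+1$ this goes in the direction you need, but it should be verified rather than asserted.
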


\begin{proof}
We prove the theorem by induction on $t_1 + t_2 \ge 0$, and again must have $t_2 \ge 0$.  The base case of $t_1 + t_2 = 0$ follows from Theorem \ref{thm:klevels}, as when $\card{\cF} = M_k = M_{k-1} + \binom{n}{a-k}$, the right-hand side above is less than the lower bound for $t = \binom{n}{a-k}$ in Theorem \ref{thm:klevels}.

\medskip

We may now assume that any set $F$ not in the middle $k-1$ levels is contained in at most $\left( \binom{a}{k-1} + \binom{a}{k} \binom{k}{2} \right) (k-1)!$ $k$-chains.  If not, then we may remove $F$ from $\cF$, thus decreasing $t_1$ by $1$.  Applying the inductive hypothesis to $\cF \setminus \{F\}$ and adding the $k$-chains involving $F$ then gives the requisite number of $k$-chains.

We once again seek to bound the number of $k$-chains in our family by counting sets and $k$-chains in permutations, except this time we shall consider all $k$-chains appearing in the permutations.  A permutation with $s$ sets gives rise to $\binom{s}{k}$ $k$-chains, and since for all $s$ we have $ks - \binom{s}{k} \le k^2 - 1$, it follows that
\begin{align} \label{ineq:LYM3}
	(k^2 - 1) n! &\ge k \sum_{F \in \cF} \card{ S_n[F] } - \sum_{F_1 \subset \hdots \subset F_k \in \cC(\cF)} \card{ \cap_{i=1}^k S_n[F_i] } \\
	&= \sum_{F \in \cF} \left( k \card{S_n[F]} - \sum_{F_1 \subset \hdots \subset F_k \in \cC(F)} \card{ \cap_{i=1}^k S_n[F_i] } \right). \notag
\end{align}

To maximize $\card{ \cap_{i=1}^k S_n[F_i] }$, since we have no sets of size $n-1$, we should again take all the gaps to be as small as possible.  Those chains in $\cC_1(\cF)$ all have steps of size $1$, while those in $\cC_2(\cF)$ should have one step of size $2$, and the rest of size $1$.  Dividing by $n!$ gives
\[ k^2 - 1 \ge \sum_{F \in \cF} \frac{1}{\binom{n}{\card{F}}} \left( k - \frac{C_1(F)}{\binom{m(F)}{k-1} (k-1)!} - \frac{C_2(F)}{\binom{m(F)}{k} \frac{k!}{2}} \right). \]

By definition, if $a - k + 1 \le \card{F} \le a - 1$, we must have $C_1(F) = C_2(F) = 0$, and if $\card{F} = a-k$, then $C_2(F) = 0$.  Thus we have three types of $k$-chains to consider: those in $\cC_1(F)$ for $\card{F} = a-k$, those in $\cC_1(F)$ for $F \in \hat{\cF} = \{ F \in \cF : \card{F} \le a - k - 1 \textrm{ or } \card{F} \ge a \}$, and those in $\cC_2(F)$ for $F \in \hat{\cF}$.  Splitting our sums thus, we obtain
\begin{align*}
	k^2 - 1 \ge &\sum_{i=a-k}^{a-1} \frac{k\card{\cF_i}}{\binom{n}{i}} - \frac{ \sum_{\card{F} = a-k} C_1(F) }{\binom{n}{a-k} \binom{n-a+k}{k-1} (k-1)!} + \sum_{F \in \hat{\cF}} \frac{1}{\binom{n}{\card{F}}} \left( 1 - \frac{C_1(F)}{\binom{m(F)}{k-1} (k-1)!} \right) \\
	&+  \sum_{F \in \hat{\cF}} \frac{1}{\binom{n}{\card{F}}} \left( k - 1 - \frac{C_2(F)}{\binom{m(F)}{k} \frac{k!}{2}} \right).
\end{align*}

Since
\[ \sum_{\card{F} = a-k} C_1(F) = C(\cF) - \sum_{F \in \hat{\cF}} C_1(F) - \sum_{F \in \hat{\cF}} C_2(F), \]
we can substitute this expression into the second sum, and redistribute, to obtain
\[ \frac{C(\cF)}{\binom{n}{a-k} \binom{n-a+k}{k-1} (k-1)!} \ge \sum_{i = a-k}^{a-1} \frac{k \card{\cF_i}}{\binom{n}{i}} - (k^2 - 1) + \Sigma_1 + \Sigma_2, \]
where
\begin{align*}
	\Sigma_1 &= \sum_{F \in \hat{\cF}} \frac{1}{\binom{n}{\card{F}}} - \frac{C_1(F)}{\binom{n}{\card{F}} \binom{m(F)}{k-1} (k-1)!} + \frac{C_1(F)}{\binom{n}{a-k} \binom{n-a+k}{k-1} (k-1)!}, \textrm{ and } \\
	\Sigma_2 &= \sum_{F \in \hat{\cF}} \frac{1}{\binom{n}{\card{F}}} \left( k - 1 - \frac{C_2(F)}{\binom{m(F)}{k} \frac{k!}{2} } \right) + \frac{C_2(F)}{\binom{n}{a-k} \binom{n-a+k}{k-1} (k-1)! }.
\end{align*}

The following lemmas, whose proofs we defer to the end of this subsection, allow us to bound these sums.

\begin{LEMMA} \label{lem:c1bound}
For every $F \in \hat{\cF}$, we have
\[ \frac{1}{\binom{n}{\card{F}}} - \frac{C_1(F)}{\binom{n}{\card{F}} \binom{m(F)}{k-1} (k-1)!} + \frac{C_1(F)}{\binom{n}{a-k} \binom{n-a+k}{k-1} (k-1)!} \ge \frac{\binom{a}{k-1}}{\binom{n}{a-k} \binom{n-a+k}{k-1}}. \]
\end{LEMMA}

\begin{LEMMA} \label{lem:c2bound}
For every $F \in \hat{\cF}$, we have
\[ \frac{1}{\binom{n}{\card{F}}} \left( k-1 - \frac{C_2(F)}{\binom{m(F)}{k} \frac{k!}{2}} \right) \ge \frac{1}{\binom{n}{a}} \left( k - 1 - \frac{C_2(F)}{\binom{a}{k} \frac{k!}{2}} \right). \]
\end{LEMMA}

We now replace our summands with these lower bounds, obtaining
\begin{align*}
	\frac{C(\cF)}{\binom{n}{a-k} \binom{n-a+k}{k-1} (k-1)!} &\ge \sum_{i = a-k}^{a-1} \frac{k \card{\cF_i}}{\binom{n}{i}} - (k^2 - 1) + \sum_{F \in \hat{\cF}} \frac{\binom{a}{k-1}}{\binom{n}{a-k} \binom{n-a+k}{k-1}} \\
	&\quad + \sum_{F \in \hat{\cF}} \left( \frac{k-1}{\binom{n}{a}} + C_2(F) \left( \frac{1}{\binom{n}{a-k} \binom{n-a+k}{k-1} (k-1)!} - \frac{1}{\binom{n}{a} \binom{a}{k} \frac{k!}{2}} \right) \right) \\
	&= \sum_{i=a-k}^{a-1} \frac{k \card{\cF_i}}{\binom{n}{i}} - (k^2 - 1) + | \hat{\cF} | \left( \frac{ \binom{a}{k-1} }{ \binom{n}{a-k} \binom{n-a+k}{k-1} } + \frac{k-1}{\binom{n}{a}} \right) \\
	&\quad + \frac{C_2(\cF)}{\binom{n}{a-k} \binom{n-a+k}{k-1} (k-1)!} \left( 1 - \frac{2}{n-a+1} \right).
\end{align*}

We can use Proposition \ref{prop:stepchains} to lower bound $C_2(\cF)$.  Moreover, as $a \ge n-a+k$, it follows that each set in $\hat{\cF}$, whose size is not $a$, has greater weight than any set in the middle $k$ levels.  Thus, the right-hand side is minimized when we fill the middle $k-1$ levels as much as possible.  If we were to have the full $k$ middle levels, the first sum would be equal to $k^2$.  However, as we must have at least $t_2$ sets missing from the middle $k-1$ levels, it is best to have exactly $t_2$ sets of size $a-1$ missing, resulting in
\begin{align*}
	\frac{C(\cF)}{\binom{n}{a-k} \binom{n-a+k}{k-1} (k-1)!} &\ge 1 - \frac{k t_2}{\binom{n}{a-1}} + \left(t_1 + t_2 \right) \left( \frac{\binom{a}{k-1}}{\binom{n}{a-k} \binom{n-a+k}{k-1}} + \frac{k-1}{\binom{n}{a}} \right) \\
	&\quad + \frac{\left(t_1 + \left( \frac{k-1}{a} \right) t_2\right) \binom{a}{k} \binom{k}{2} (k-1)!}{\binom{n}{a-k} \binom{n-a+k}{k-1} (k-1)!} \left(1 - \frac{2}{n-a+1} \right) \\
	&= 1 + A_1 t_1 + A_2 t_2,
\end{align*}
where, after simplifying the binomial expressions, we find
\begin{align*}
	A_1 &= \frac{\binom{a}{k-1}}{\binom{n}{a-k} \binom{n-a+k}{k-1}} + \frac{\binom{a}{k} \binom{k}{2}}{\binom{n}{a-k} \binom{n-a+k}{k-1}} + \frac{k-1}{\binom{n}{a}} - \frac{2 \binom{a}{k} \binom{k}{2}}{(n-a+1) \binom{n}{a-k} \binom{n-a+k}{k-1}} \\
	&= \frac{\binom{a}{k-1} + \binom{a}{k} \binom{k}{2}}{\binom{n}{a-k} \binom{n-a+k}{k-1}}, \textrm{ and } \\
	A_2 &= \frac{\binom{a}{k-1}}{\binom{n}{a-k} \binom{n-a+k}{k-1}} + \frac{(k-1) \binom{a}{k} \binom{k}{2}}{a \binom{n}{a-k} \binom{n-a+k}{k-1}} + \frac{k-1}{\binom{n}{a}} - \frac{k}{\binom{n}{a-1}} - \frac{2(k-1) \binom{a}{k} \binom{k}{2}}{a(n-a+1) \binom{n}{a-k} \binom{n-a+k}{k-1}} \\
	&= \frac{k-1}{a} \left( \frac{\binom{a}{k-1} + \binom{a}{k} \binom{k}{2}}{\binom{n}{a-k}\binom{n-a+k}{k-1}} + \frac{2a-n-k}{\binom{n}{a}} \right) \ge \frac{k-1}{a} \frac{\binom{a}{k-1} + \binom{a}{k} \binom{k}{2}}{\binom{n}{a-k} \binom{n-a+k}{k-1}}.
\end{align*}

Multiplying through by $\binom{n}{a-k} \binom{n-a+k}{k-1} (k-1)!$ gives the desired bound.
\end{proof}

To complete the proof, we now prove the two lemmas.

\begin{proof}[Proof of Lemma \ref{lem:c1bound}]
Note that $\binom{n}{\card{F}} \binom{m(F)}{k-1} = \binom{n}{k-1} \binom{n-k+1}{m(F) - k + 1}$, and, by the same token, $\binom{n}{a-k} \binom{n-a+k}{k-1} = \binom{n}{k-1} \binom{n-k+1}{a - k}$.  Since $a - k = \floor{\frac{n-k+1}{2}}$, and, as $F \in \hat{\cF}$, we have $m(F) \ge a$, it follows that $\binom{n}{\card{F}} \binom{m(F)}{k-1} \le \binom{n}{a-k} \binom{n-a+k}{k-1}$.
Thus the left-hand side of the inequality is minimized when we choose $C_1(F)$ as large as possible.  By definition, $C_1(F) \le \binom{m(F)}{k-1} (k-1)!$.  Making this substitution gives
\[ \frac{1}{\binom{n}{\card{F}}} - \frac{C_1(F)}{\binom{n}{\card{F}} \binom{m(F)}{k-1} (k-1)!} + \frac{C_1(F)}{\binom{n}{a-k} \binom{n-a+k}{k-1} (k-1)!} \ge \frac{\binom{m(F)}{k-1}}{\binom{n}{a-k} \binom{n-a+k}{k-1}} \ge \frac{\binom{a}{k-1}}{\binom{n}{a-k} \binom{n-a+k}{k-1}}. \]
\end{proof}

\begin{proof}[Proof of Lemma \ref{lem:c2bound}]
If $m(F) = a$, then we have equality, so we may assume $m(F) \ge a+1$.  By induction, we can assume that no set is in more than $\left( \binom{a}{k-1} + \binom{a}{k} \binom{k}{2} \right) (k-1)!$ $k$-chains, giving a bound on $C_2(F)$.  Thus
\[ C_2(F) \le \left( \binom{a}{k-1} + \binom{a}{k} \binom{k}{2} \right) (k-1)! \le \binom{a+1}{k} \binom{k}{2} (k-1)! \le (k-1) \binom{m(F)}{k} \frac{k!}{2}, \]
and so the factor $\left( k - 1 - \frac{C_2(F)}{\binom{m(F)}{k} \frac{k!}{2}} \right)$ is non-negative.  As $\binom{n}{a} \ge \binom{n}{\card{F}}$, we thus have
\[ \frac{1}{\binom{n}{\card{F}}} \left( k-1 - \frac{C_2(F)}{\binom{m(F)}{k} \frac{k!}{2}} \right) \ge \frac{1}{\binom{n}{a}} \left( k - 1 - \frac{C_2(F)}{\binom{m(F)}{k} \frac{k!}{2}} \right) \ge \frac{1}{\binom{n}{a}} \left( k - 1 - \frac{C_2(F)}{\binom{a}{k} \frac{k!}{2}} \right). \]
\end{proof}

\subsection{Forbidding large sets}

Given the previous theorem, all that remains is to show that an extremal family cannot contain sets of size $n-1$.  The idea behind this is as follows.  A set of size $n-1$ has a very large shadow in the $k-1$ middle levels.  In order for this set to not contain too many $k$-chains, we must therefore be missing a lot of sets in the $k-1$ middle levels.  By Theorem \ref{thm:allchains}, it then follows that the remainder of the family must contain many more $k$-chains than it ought to.  The relevant calculations are given below.

\begin{PROP} \label{prop:nolargesets}
Suppose $n \ge 15$ and $k \le n - 6$, and let $\cF$ be a set family with $\card{F} = M_k + t$.  If we do not have $1 \le \card{F} \le n-2$ for all $F \in \cF$, then
\[ c_k(\cF) > \binom{n}{a-k} \binom{n-a+k}{k-1} (k-1)! + t \left( \binom{a}{k-1} + \binom{a}{k} \binom{k}{2} \right) (k-1)!. \]
\end{PROP}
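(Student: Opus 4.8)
Write $P=\bigl(\binom{a}{k-1}+\binom{a}{k}\binom{k}{2}\bigr)(k-1)!$ and $Q=\binom{n}{a-k}\binom{n-a+k}{k-1}(k-1)!$, so that the claimed bound is $c_k(\cF)>Q+tP$. The plan is an induction on $t$ in which we peel off one set of size $0$, $n-1$, or $n$ at a time, always choosing one as far from the middle as possible. If $[n]\in\cF$ (and symmetrically if $\emptyset\in\cF$) the analysis below goes through with room to spare: $[n]$ lies in every $k$-chain obtained by adding $[n]$ on top of a $(k-1)$-chain of $\cF$, so $c_k(\cF)=c_k(\cF\setminus\{[n]\})+c_{k-1}(\cF\setminus\{[n]\})$, and one bounds $c_k(\cF\setminus\{[n]\})$ by Theorem \ref{thm:allchains} (or induction) and $c_{k-1}(\cF\setminus\{[n]\})$ from below by the $(k-1)$-chains lying in the middle $k-1$ levels. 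So assume $\emptyset,[n]\notin\cF$ and fix $F_0\in\cF$ with $\card{F_0}=n-1$. Since $k\le n-6$ we have $n-1>a-1$, so $F_0$ sits strictly above all of the middle $k$ levels; hence every $k$-chain of $\cF$ through $F_0$ has $F_0$ as its top set, and the number of these equals the number of $(k-1)$-chains of $\cF\setminus\{F_0\}$ contained in $F_0$.

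Let $t_2$ denote the number of sets of the middle $k-1$ levels (sizes $a-k+1,\ldots,a-1$) missing from $\cF$; this is unaffected by deleting $F_0$. Restricting to $(k-1)$-chains all of whose sets lie in the middle $k-1$ levels \emph{and} inside $F_0$, there are $N_0:=\binom{n-1}{a-1}D_0$ of these when those levels are complete, where $D_0:=(a-1)(a-2)\cdots(a-k+2)$, and deleting $t_2$ of the sets destroys at most $t_2D_0$ of them, since (a short log-concavity computation shows) $D_0$ is the largest number of such chains through a single set. Thus $F_0$ lies in at least $N_0-t_2D_0$ $k$-chains of $\cF$. Meanwhile $\cF\setminus\{F_0\}$ has $M_k+(t-1)$ sets with $t_2$ of the middle $k-1$ levels missing, so Theorem \ref{thm:allchains} — or, if $\cF\setminus\{F_0\}$ still contains a set of size $0$, $n-1$, or $n$, a version of the present proposition strengthened to carry the $t_2$-term — gives $c_k(\cF\setminus\{F_0\})\ge Q+\bigl((t-1)+\tfrac{k-1}{a}t_2\bigr)P$. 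Adding the two contributions,
\[ c_k(\cF)\ \ge\ Q+tP\ +\ \Bigl(\tfrac{k-1}{a}\,t_2P-P+\max\{0,\,N_0-t_2D_0\}\Bigr). \]
If $t_2\ge\frac{a}{k-1}$ the bracketed term is nonnegative, and in fact positive since $F_0$ lies in at least one $k$-chain, so we are done. If $t_2<\frac{a}{k-1}$, we bound $\max\{0,\,N_0-t_2D_0\}\ge N_0-\tfrac{a}{k-1}D_0$, and it suffices to prove $N_0-\tfrac{a}{k-1}D_0\ge P$.

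Dividing this by $D_0$ and simplifying the binomials — one has $N_0/D_0=\binom{n-1}{a-1}$ and $P/D_0=a+\tfrac{a(k-1)(a-k+1)}{2}$ — it becomes
\[ \binom{n-1}{a-1}\ \ge\ a+\frac{a}{k-1}+\frac{a(k-1)(a-k+1)}{2}, \]
and verifying this for all $n\ge 15$ and $2\le k\le n-6$ is the heart of the proof. The right-hand side is a polynomial in $a=\ceil{(n+k)/2}\le n-3$; in the delicate regime where $k$ is close to $n$ and $\binom{n-1}{a-1}$ is smallest (for $k=n-6$ it equals $\binom{n-1}{3}$), the right-hand side is only quadratic in $n$ and is beaten comfortably, while for smaller $k$ the binomial coefficient $\binom{n-1}{a-1}$ is exponentially large and the inequality is immediate; the hypotheses $n\ge 15$ and $k\le n-6$ supply exactly the slack required, and the inequality genuinely fails without them. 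This estimate, together with a handful of boundary cases, is the main obstacle; the remaining ingredients — identifying $D_0$ as the correct maximum, handling $\emptyset$ and $[n]$, and strengthening the inductive statement so that the $\tfrac{k-1}{a}t_2P$ term survives the case where $\cF\setminus\{F_0\}$ still has a large set — are routine.
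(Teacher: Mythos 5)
Your overall strategy is the same as the paper's: a set $F_0$ of size $n-1$ contains $\binom{n-1}{a-1}\binom{a-1}{k-2}(k-2)!$ potential $k$-chains running through the middle $k-1$ levels, so either $F_0$ lies in many $k$-chains or many middle-level sets are missing, and in the latter case the $t_2$-term of Theorem \ref{thm:allchains} supplies the deficit; your target inequality $\binom{n-1}{a-1}\ge a+\tfrac{a}{k-1}+\tfrac{a(k-1)(a-k+1)}{2}$ is, up to the size of the lower-order term, exactly the estimate the paper verifies. However, there is a genuine gap in the step you dismiss as routine. After deleting a single set $F_0$ of size $n-1$, the family $\cF\setminus\{F_0\}$ may still contain further sets of size $n-1$ (or $\emptyset$, $[n]$), so Theorem \ref{thm:allchains} does not apply to it, and the ``strengthened version of the present proposition carrying the $t_2$-term'' that you invoke instead does not have a closing inductive step: in that strengthened statement the quantity $\tfrac{k-1}{a}t_2P$ appears on both sides and cancels, so its inductive step would require the $k$-chains through $F_0$ alone to exceed $P$. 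When $t_2$ is large (say $t_2\ge\binom{n-1}{a-1}$) your lower bound $\max\{0,\,N_0-t_2D_0\}$ is zero --- $F_0$ may genuinely lie in no $k$-chain at all --- and the step fails. Your dichotomy on $t_2$ rescues the unstrengthened statement but not the strengthened one, and it is the strengthened one your induction needs.

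The paper circumvents this by restructuring. It first reduces, by induction on the family size using only the original statements (Theorem \ref{thm:allchains} or the proposition itself applied to $\cF$ minus one set), to the case where every set of size $n-1$ lies in at most $P=\bigl(\binom{a}{k-1}+\binom{a}{k}\binom{k}{2}\bigr)(k-1)!$ $k$-chains; the shadow argument then forces $t_2\ge\binom{n-1}{a-1}-a\bigl(1+\tfrac{(a-k+1)(k-1)}{2}\bigr)$, and it then deletes \emph{all} sets of size $n-1$ simultaneously, losing at most $n$ from $t_1$, so that Theorem \ref{thm:allchains} applies directly to what remains. The price is that one must verify $\tfrac{k-1}{a}t_2>n$ rather than your weaker threshold $\tfrac{k-1}{a}t_2\ge 1$, i.e.\ the key inequality acquires an extra $\tfrac{na}{k-1}$ on its right-hand side --- still comfortably satisfied for $n\ge 15$ and $k\le n-6$. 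You should adopt this ``delete everything at once'' structure, or otherwise repair the induction; the remaining ingredients of your argument, including the identification of $D_0=\binom{a-1}{k-2}(k-2)!$ as the maximal number of middle-level $(k-1)$-chains through a single set, do match the paper.
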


\begin{proof}
The same proof as in Lemma \ref{lem:noverylargesets} shows that we may assume we do not have $\emptyset$ or $[n]$ in $\cF$.  Hence it suffices to show there are no sets of size $n-1$ in our family.

\medskip

Suppose towards contradiction we had some set $F \in \cF$ with $\card{F} = n-1$.  We may, by induction, assume that $F$ is in at most $\left(\binom{a}{k-1} + \binom{a}{k} \binom{k}{2} \right) (k-1)!$ $k$-chains.

Since $F$ contains $\binom{n-1}{a-1}$ sets of size $a-1$, there are $\binom{n-1}{a-1} \binom{a-1}{k-2} (k-2)!$ possible $k$-chains that $F$ might be in which consist of $k-1$ sets from the $k-1$ middle levels followed by $F$.  Hence we must be missing a lot of sets from the $k-1$ middle levels to prevent $F$ from being in too many $k$-chains.  The sets of size $a-1$ are contained in the most such $k$-chains, so if we are missing $t_2$ sets, we must have
\[ \left( \binom{n-1}{a-1} - t_2 \right) \binom{a-1}{k-2} (k-2)! \le \left( \binom{a}{k-1} + \binom{a}{k} \binom{k}{2} \right) (k-1)!. \]
Solving for $t_2$ gives
\[ t_2 \ge \binom{n-1}{a-1} - a \left( 1 + \frac{(a-k+1)(k-1)}{2} \right). \]

We now remove from $\cF$ all sets of size $n-1$, thus losing at most $n$ sets, and apply Theorem \ref{thm:allchains} with the above value of $t_2$.  In the theorem, the number of $k$-chains is governed by the expression $\left( t_1 + \left( \frac{k-1}{a} \right) t_2 \right)$.  We are decreasing $t_1$ by at most $n$, but increasing $t_2$ by at least $\binom{n-1}{a-1} - a \left( 1 + \frac{(a-k+1)(k-1)}{2} \right)$, resulting in a net gain in the previous expression of at least
\[ \frac{k-1}{a} \left( \binom{n-1}{a-1} - a \left( 1 + \frac{(a-k+1)(k-1)}{2} \right) \right) - n. \]

Since $n-k \ge 6$, we have $n - a \ge 3$.  If $n - a$ is some constant, then the first term is at least cubic in $n$, while the term we are subtracting is quadratic, since in this case $a - k + 1$ will also be constant.  One the other hand, if $n - a$ is large, the first term will be at least a large power of $n$, while the term we subtract is at most cubic in $n$.  Given $n \ge 15$, some simple but tedious calculations show that in either case, having a set of size $n-1$ increases the number of $k$-chains our family must contain.
\end{proof}

Note that the condition $k \le n - 6$ is near-optimal, since if $k = n - 3$, then $M_k = 2^n - 2n - 2$, and so by volume considerations alone there must be extremal families with sets of size at least $n-1$.

\medskip

Theorem \ref{thm:k+1levels} now follows easily.

\begin{proof}[Proof of Theorem \ref{thm:k+1levels}]
Since $n \ge 15$ and $k \le n-6$, Proposition \ref{prop:nolargesets} shows that $1 \le \card{F} \le n-2$ for all $F \in \cF$.  We may then apply Theorem \ref{thm:allchains} with $t_1 = t$ and $t_2 = 0$ to obtain the bound
\[ C(\cF) \ge \binom{n}{a-k} \binom{n-a+k}{k-1} (k-1)! + t \left( \binom{a}{k-1} + \binom{a}{k} \binom{k}{2} \right) (k-1)!. \]
Recalling that $a = \ceil{\frac{n+k}{2}}$, this is precisely the desired lower bound.

We can again deduce a characterization of the extremal families.  Note that we must have $t_2 = 0$ for the above bound to hold, and so the $k-1$ middle levels must be full.  In order to have equality in Lemma \ref{lem:c2bound}, we also needed $m(F) = a$ for all $F \in \hat{\cF}$.  If $a = \frac{n+k}{2}$, then the remaining $\binom{n}{a} + t$ sets must have size $\frac{n}{2} \pm \frac{k}{2}$.  To obtain equality in \eqref{ineq:LYM3}, every chain must pass through either $k$ or $k+1$ sets of $\cF$, and so we must have $\{G \notin \cF: \card{G} = \frac{n}{2} \pm \frac{k}{2} \}$ forming an antichain.

If, on the other hand, $a = \frac{n+k+1}{2}$, then sets of size $a = \frac{n}{2} + \frac{k+1}{2}$ carry greater weight than sets of size $a-k = \frac{n}{2} - \frac{k-1}{2}$.  We can then redefine $t_2$ above to be the number of sets missing in the $k$ middle levels and obtain the same result.  Hence it follows that we must have all sets in the $k$ middle levels, with the remaining sets in $\hat{\cF}$ of size $\frac{n}{2} \pm \frac{k+1}{2}$.  In order to maintain equality in $\eqref{ineq:LYM3}$, every chain must pass through at most one set $F \in \cF$ with $\card{F} = \frac{n}{2} \pm \frac{k+1}{2}$, and so $\{F \in \cF: \card{F} = \frac{n}{2} \pm \frac{k+1}{2} \}$ must be an antichain.

Thus, once again, the extremal families are exactly the same as those that minimize the number of $2$-chains, as given by Theorem \ref{thm:2chains}.
\end{proof}

\section{Concluding remarks and open problems} \label{sec:conclusion}

In this paper, we have partially answered Kleitman's conjecture by showing that the families that minimize the number of $2$-chains also minimize the number of $k$-chains when they occupy up to the $k+1$ middle levels.  While we strongly believe the conjecture is true in general, we suspect new ideas are needed to deal with larger families.  As the number of levels grows with respect to $k$, the number of different types of chains - in terms of the sizes of the steps between sets - grows rapidly, and these would all need to be controlled to obtain a precise result.  In this direction, though, the same methods we have used above can be applied to show the following: if we have integers $\alpha_1, \alpha_2, \hdots, \alpha_{k-1}$ with $\sum_i \alpha_i = \ell - 1$, then, provided $\card{\cF} \le M_{\ell}$ and the largest set in our family has size at most $n - \max_i \alpha_i$, the number of $k$-chains $F_1 \subset F_2 \subset \hdots \subset F_k$ with $\card{F_{i+1} \setminus F_i} \ge \alpha_i$ is minimized by taking sets in the middle $\ell$ levels.

\medskip

Considering the case of $2$-chains, our paper has focused on showing that a family with more than $\sperner$ sets must contain many $2$-chains.  A closely related problem is to determine whether such a family must have any sets contained in many $2$-chains.  This type of question has been studied before in other settings.  For example, when one is considering the number of triangles in a graph,
Erd\H{o}s showed in \cite{erd62a} that any graph with $\floor{ \frac{n^2}{4} } + 1$ edges must contain an edge in at least $\frac{n}{6} + o(n)$
triangles. It is well-known and easy to see that the hypercube, a graph whose vertices are subsets of $[n]$, with two vertices adjacent if they are comparable and differ in exactly one element, has independence number $2^{n-1}$.  Chung, F\"uredi, Graham and Seymour \cite{chung} proved any induced subgraph on $2^{n-1} + 1$ vertices contains a vertex of degree at least $(\frac12 + o(1)) \log_2 n$.  It is an open
problem to determine whether or not this bound is tight (the corresponding upper bound is $O(\sqrt{n})$), and the answer to this question has ramifications in theoretical computer
science.

In the context of Sperner's theorem the above problem has a negative answer, which may be surprising given the previous two examples. For convenience, let us assume $n = 2m+1$ is odd,
and consider the following set family.  Let
\[ \cF = \{ F : 1 \notin F, |F| = m \} \cup \{ F : 1 \in F, |F| = m + 1 \}. \]
This family contains $2 \binom{2m}{m} = \left( 1 + \frac{1}{n} \right) \sperner$ sets, and so we are indeed beyond the Sperner bound.  However, it is easy to see that the only pairs of comparable sets are of the form $\{ F , \{1\} \cup F \}$ for every $F \in \cF$ with $|F| = m$.  Hence each set of the family is in only one pair of comparable sets.  In fact, for this family we have $c_2(\cF) = c_2(n, |\cF|)$, so it is possible to have an extremal family with the comparable pairs distributed as evenly as possible. Theorem \ref{thm:2chains} shows that any family with $2\binom{2m}{m} + 1$ sets must contain a set in at least two $2$-chains.  It is an open problem as to whether this is also the largest family without a set that contains two other sets (and hence is the maximum set in two $2$-chains).  This configuration is known as a $2$-fork, and the upper bound, which can be obtained using similar arguments as in Theorem \ref{thm:klevels}, is $\left(1 + \frac{2}{n} \right) \sperner$, as shown by Katona and Tarj\'an \cite{kattar}.

\medskip

We find most exciting the prospect of studying Erd\H{o}s-Rademacher-type problems in other settings.  Within the context of Sperner's Theorem, a paper of Qian, Engel and Xu
\cite{qian} studied an extension for multiset families, where the same set may be chosen multiple times.  In a series of two papers, Mubayi \cite{mub10,mub13} extended the Erd\H{o}s-Rademacher results to graphs other than cliques, studying the question for color-critical graphs and some $3$- and $4$-uniform hypergraphs.  In a subsequent paper, we will present Erd\H{o}s-Rademacher type strengthening of the Erd\H{o}s-Ko-Rado Theorem.  However, as one can investigate similar extensions for any extremal result, there is truly no end to the number of directions in which this project can be continued.  We hope that further work of this nature will lead to many interesting results and a greater understanding of classical theorems in extremal combinatorics.

\medskip

\noindent {\bf Note added in proof:} During the preparation of this manuscript, it came to our attention that Dove, Griggs, Kang and Sereni \cite{dove} have independently obtained Theorem \ref{thm:klevels}.

\appendix

\section{The shifting proposition} \label{app:shifting}

In this appendix, we prove Proposition \ref{prop:shifting}, which enables us to perform the shifting necessary for Theorem \ref{thm:2chains}.  As mentioned in Section \ref{sec:2chains}, this is essentially the same shifting argument used in the original proof of Kleitman in \cite{kleitman}.  We provide the proof here as the details of the calculations are not included in Kleitman's paper.

\begin{proof}[Proof of Proposition \ref{prop:shifting}]
Suppose not.  Note that we must have $m \ge 1$, otherwise there is nothing to prove.  Let $\ell \le 2m - 1$ be the minimal integer such that there exists a largest set of size $\frac{n}{2} + m$ with a subset of size $\frac{n}{2} + m - \ell$ that is not in the family.  Let $\cA = \{A \in \cF : |A| = \frac{n}{2} + m, \partial^{\ell} A \not\subset \cF \}$, and let $\cB = \partial^{\ell} \cA \setminus \cF$.  We can construct an auxiliary bipartite inclusion graph on $\cA \cup \cB$, with an edge $(A,B)$ iff $B \subset A$.

Consider first the case where we have a matching $M : \cA \rightarrow \cB$, so that for every set $A \in \cA$ there exists a set $M(A) \subset A$, $M(A) \notin \cF$.  We shift the
family from $\cF$ to $\tilde{\cF}$ by replacing each set $A \in \cA$ by $M(A) \in \cB$, and claim that this reduces the number of $2$-chains.  Note that if $B = M(A)$ is a
newly-introduced set, and $C \in \cF$ is a set with $C \subset B$, then we must have had $C \subset A$ as well.  Thus the only $2$-chains that we need to consider are those
between the levels $\frac{n}{2} + m$ and $\frac{n}{2} + m - \ell$; we call these \emph{intermediate chains}.

Suppose $\ell > 1$.  By the minimality of our choice of $\ell$, we must have $\partial^i \cA \subset \cF$ for every $1 \le i \le \ell - 1$.  Thus the number of intermediate chains in
$\cF$ that we lose is at least $|\cA| \sum_{i = 1}^{\ell - 1} \binom{\frac{n}{2} + m}{i}$.  On the other hand,
all sets of size $\frac{n}{2} + m$ in $\tilde{\cF}$ are the sets from $\cF$ with $i$-shadow completely in $\cF$ for all $i\le \ell$. These sets cannot be involved in any $2$-chains with sets in $\cB$, and therefore we only gain
intermediate chains between the levels $\frac{n}{2} + m-1$ and $\frac{n}{2} + m - \ell$.  The number of
such chains that we gain is at most $|\cA| \sum_{i=1}^{\ell - 1} \binom{\frac{n}{2} - m + \ell}{i}$.  Since $\ell \le 2m - 1$, it follows that $\binom{\frac{n}{2} - m + \ell}{i} <
\binom{\frac{n}{2} + m}{i}$ for every $1 \le i \le \ell - 1$, and hence the number of $2$-chains decreases.

Thus we may assume $\ell = 1$.  If we had $A \in \cA$ and $B \in \cF$ with $B \subset A$, $|B| = \frac{n}{2} + m - 1$, then upon shifting to $\tilde{\cF}$, we lose the $2$-chain
$B \subset A$ and gain no pairs.  Hence we may assume $\partial \cA \cap \cF = \emptyset$, so $\cB = \partial \cA$.  We now claim that the sets $A \in \cA$ cannot be involved in any
$2$-chains $C \subset A$ in $\cF$.  Suppose to the contrary we had such a $2$-chain.  Let $x \in C$ be an arbitrary element of $C$, and shift $A$ to $A \setminus \{ x \}$
(recall that $A \setminus \{ x \}\not \in \cF$).  Shift the remaining sets in $\cA$ by an arbitrary matching from $\cA' = \cA \setminus \{ A \}$ to $\cB' = \partial \cA \setminus \{ A \setminus \{ x \} \}$; we can do this by Hall's Theorem, since every
set in $\cA'$ has at least $\frac{n}{2} + m - 1$ neighbors in $\cB'$, while each set in $\cB'$ has at most $\frac{n}{2} - m + 1$ neighbors.  In this shifted set we have lost the
$2$-chain $C \subset A$, and hence $\tilde{\cF}$ has fewer $2$-chains.

Hence we may assume that there are no $2$-chains in $\cF$ involving sets in $\cA$.  Thus in the shifted family $\tilde{\cF}$, sets in $\partial \cA$ will also not be in any
$2$-chains.  Now, since $|\cF| > \sperner$, it follows from Sperner's Theorem that there is some $2$-chain $C \subset D$ in $\cF$.  In $\tilde{\cF}$,
we may also shift $D$ to some set in $\partial \cA$, since $m \ge 1$ implies $| \partial \cA | > | \cA|$.  As no set in $\partial \cA$ is involved in any $2$-chain, this reduces
the number of $2$-chains, which contradicts the minimality of $\cF$.

Therefore we conclude that there cannot be a matching from $\cA$ to $\cB$ in the auxiliary bipartite inclusion graph, and so we will not shift all sets in $\cA$.  Instead, we use the following lemma, to be proven shortly, to find a collection of sets to shift.

\begin{LEMMA} \label{lem:bipartite}
Let $G$ be a bipartite graph on $U \cup V$ with minimum degree $\delta_U \ge 1$ in $U$ and maximum degree $\Delta_V$ in $V$.  Suppose there is no matching from $U$ to $V$.  Then there exist nonempty subsets $U_1 \subset U$ and $V_1 \subset V$ with a perfect matching $M: U_1 \rightarrow V_1$ and $e(U_1, V) + e( U \setminus U_1, V_1) \le |U_1| \Delta_V$.
\end{LEMMA}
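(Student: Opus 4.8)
The plan is to produce the sets $U_1, V_1$ directly from a maximum matching of $G$, using the alternating-path (Hungarian-tree) structure rooted at an unsaturated vertex, and then to verify the edge inequality by a one-line double count of degrees.

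Concretely, I would first fix a maximum matching $M$ of $G$. Since there is no matching saturating $U$, some vertex $u_0 \in U$ is left unsaturated. Let $Z$ be the set of all vertices of $G$ reachable from $u_0$ by an $M$-alternating path (which necessarily begins with a non-matching edge), and put $U_1' = Z \cap U$ and $V_1' = Z \cap V$. The standard Hungarian-tree analysis then gives three facts: (i) every vertex of $V_1'$ is $M$-saturated — otherwise $M$ would have an augmenting path, contradicting maximality — and is matched into $U_1'$; (ii) $U_1' = \{u_0\} \cup \{ M(v) : v \in V_1'\}$, with $M$ restricting to a bijection $V_1' \to U_1' \setminus \{u_0\}$, so $|U_1'| = |V_1'| + 1$; and (iii) $N(U_1') \subseteq V_1'$, since any edge leaving $U_1'$ extends an alternating path (checking separately the edge at $u_0$ and the matching/non-matching edges at the other vertices of $U_1'$).

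Next I would set $U_1 = U_1' \setminus \{u_0\}$ and $V_1 = V_1'$. By (ii), $M$ restricts to a perfect matching of $U_1$ onto $V_1$, so in particular $|U_1| = |V_1|$; both are nonempty because $\delta_U \ge 1$ forces $u_0$ to have a neighbour, which lies in $V_1' = V_1$, and whose $M$-partner (distinct from $u_0$) lies in $U_1$. Also $N(U_1) \subseteq N(U_1') \subseteq V_1$. The inequality now follows immediately: since $N(U_1) \subseteq V_1$, every edge meeting $U_1$ lands in $V_1$, so $e(U_1, V) = e(U_1, V_1)$, and hence $e(U_1,V) + e(U \setminus U_1, V_1) = e(U, V_1) = \sum_{v \in V_1} \deg_G(v) \le |V_1|\,\Delta_V = |U_1|\,\Delta_V$.

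There is no real obstacle here; the technical heart is simply the careful bookkeeping in step two — establishing $N(U_1') \subseteq V_1'$ and $|U_1'| = |V_1'| + 1$, including the boundary behaviour at the root $u_0$ — and the observation that $\delta_U \ge 1$ is exactly what is needed to keep $U_1$ and $V_1$ nonempty after deleting $u_0$.
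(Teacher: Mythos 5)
Your proof is correct, but it reaches the key decomposition by a different route than the paper. The paper applies Hall's theorem abstractly: it takes a \emph{minimal} deficient set $U_0$ with $|N(U_0)| < |U_0|$, deletes an arbitrary vertex to get $U_1$ with $|N(U_1)| = |U_1|$, and invokes Hall again (via minimality of $U_0$) to get the perfect matching $U_1 \to V_1 = N(U_1)$. You instead fix a maximum matching and grow the Hungarian tree from an unsaturated root $u_0$, so that $U_1' = Z \cap U$ is a deficient set with $|N(U_1')| = |U_1'| - 1$, and the perfect matching on $U_1 = U_1' \setminus \{u_0\}$ comes for free from the restriction of the maximum matching rather than from a second appeal to Hall. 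Both constructions land in the same place --- a nonempty $U_1$ with $N(U_1) \subseteq V_1$, $|U_1| = |V_1|$, perfectly matched --- after which the edge count $e(U_1,V) + e(U\setminus U_1, V_1) = e(U,V_1) \le |V_1|\Delta_V = |U_1|\Delta_V$ is identical in the two arguments, as is the role of $\delta_U \ge 1$ in keeping $U_1$ nonempty after deleting one vertex. The paper's version is slightly shorter because it outsources all the combinatorics to Hall's theorem; yours is more self-contained and constructive (it exhibits the matching explicitly), at the cost of carrying the standard but somewhat fussy alternating-path bookkeeping, which you do handle correctly, including saturation of $V_1'$, the bijection $V_1' \to U_1'\setminus\{u_0\}$, and $N(U_1') \subseteq V_1'$.
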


Our auxiliary graph satisfies the conditions of the lemma, with $U=\cA, V=\cB, \Delta_V = \binom{\frac{n}{2} - m + \ell}{\ell} < \binom{\frac{n}{2} + m}{\ell}$, and so we can find a
collection of sets $\cA_1 \subset \cA$ and a matching $M : \cA_1 \rightarrow \cB_1 \subset \cB$ as given by the lemma.  Consider the shifted family $\tilde{\cF}$ where we replace the sets in $\cA_1$
by the corresponding sets in $\cB_1$.  As before, since for every $A \in \cA_1$ we have $M(A) \subset A$, we need only consider the intermediate chains.

Again, by the minimality of $\ell$, we know that $\cA_1$ has full shadow in $\cF$ up until the $\ell$th shadow, and so the same calculation as before implies that we
remove more chains than we gain, and thus have fewer intermediate chains in $\tilde{\cF}$.  Hence it suffices to consider only the new chains formed between levels $\frac{n}{2} + m$ and $\frac{n}{2} + m - \ell$.

The number of new chains between these levels we gain is exactly $e( \cA \setminus \cA_1, \cB_1)$.  On the other hand, we lose all chains between $\cA_1$ and $\partial^{\ell} \cA_1 \cap
\cF = \partial^{\ell} \cA_1 \setminus \cB$.  Thus the number of chains we are losing is $| \cA_1| \binom{ \frac{n}{2} + m }{ \ell } - e(\cA_1, \cB)$.  By the lemma, we have $| \cA_1 |
\binom{ \frac{n}{2} + m }{ \ell } - e( \cA_1, \cB ) > | \cA_1 | \binom{ \frac{n}{2} - m + \ell} { \ell } - e( \cA_1, \cB ) \ge e( \cA \setminus \cA_1, \cB_1 )$, and hence $\tilde{\cF}$
has fewer $2$-chains than $\cF$, contradicting the optimality of $\cF$.

\vspace{0.1in}

Thus if $\cF$ minimizes the number of $2$-chains, and $A$ is the largest set in $\cF$ with $|A| = \frac{n}{2} + m$, then whenever $B \subset A$ with $|B| \ge \frac{n}{2} - m + 1$, we must have $B \in \cF$ as well.

\end{proof}

It remains to furnish a proof of Lemma \ref{lem:bipartite}, which we now provide.

\begin{proof}[Proof of Lemma \ref{lem:bipartite}]
As there is no matching from $U$ to $V$, by Hall's Theorem there exists a minimal subset $U_0 \subseteq U$ with $|N(U_0)| < |U_0|$.  Since $\delta_U \ge 1$, we must have $|U_0| \ge 2$.  Let $u \in U_0$ be an arbitrary element, and take $U_1 = U_0 \setminus \{ u \}$.  By the minimality of $U_0$, it follows that $|N(U_1)| \ge |U_1|$, and so we must have $|N(U_1)| = |U_1|$.  Set $V_1 = N(U_1)$.  Again by the minimality of $U_0$, for any subset $X \subset U_1$, $|N(X)| \ge |X|$, and so by Hall's Theorem there exists a perfect matching $M: U_1 \rightarrow V_1$.

Now $e(U_1, V) + e(U \setminus U_1, V_1) = e(U_1,V) + e(U,V_1) - e(U_1,V_1) = e(U,V_1) \le |V_1| \Delta_V = |U_1| \Delta_V$, where the second equality follows from the fact that $N(U_1) = V_1$, and thus we have the desired inequality.
\end{proof}

\end{document}